\numberwithin{equation}{section}
\newcounter{count}
\newcommand{\num}{\stepcounter{count}\the\value{count}}
\renewcommand{\limsup}{\varlimsup}
\newtheorem{theorem}{Theorem}[section]
\newtheorem{lemma}[theorem]{Lemma}
\newtheorem{corollary}[theorem]{Corollary}
\newtheorem{proposition}[theorem]{Proposition}
\newtheorem{question}[theorem]{Question}
\theoremstyle{remark}
\newtheorem{remark}[theorem]{Remark}
\begin{document}

\title[Mills' constant is irrational]{Mills' constant is irrational}

\author[K. Saito]{Kota Saito}
\address{Kota Saito\\Department of Mathematics\\ College of Science $\&$ Technology \\ Nihon University\\Kanda\\ Chiyoda-ku\\ Tokyo\\
101-8308\\ Japan}
\email{saito.kota@nihon-u.ac.jp}

\thanks{It will appear in \textit{Mathematika}.}

\subjclass[2020]{11J72, 11J81}
\keywords{prime-representing constants, Mills' constant, transcendental number theory}

\begin{abstract}
Let $ \lfloor x \rfloor $ denote the integer part of $ x $. In 1947, Mills constructed a real number $ \xi > 1 $ such that $\lfloor \xi^{3^k} \rfloor$ is always a prime number for every positive integer $k$. We define Mills' constant as the smallest real number $\xi$ satisfying this property. Determining whether this number is irrational has been a long-standing problem. In this paper, we show that Mills' constant is irrational. Furthermore, we obtain partial results on the transcendency of this number.
\end{abstract}

\maketitle


\section{Introduction}
Let $\mathbb{N}$ denote the set of all positive integers. Let $\lfloor x\rfloor$ denote the integer part of $x\in \mathbb{R}$. Mills \cite{Mills} constructed a real number $\xi>1$ such that 
\begin{equation} \label{condition-Mills}
\text{$\lfloor  \xi^{3^k}\rfloor$ is a prime number for every $k\in \mathbb{N}$}.
\end{equation}
Such $\xi$ is not uniquely determined. In this paper, we define \textit{Mills' constant} \cite[p.130]{Finch} as the smallest $\xi>1$ satisfying \eqref{condition-Mills}. In Lemma~\ref{Lemma:Existence}, we will show the existence of the smallest element of such $\xi$'s. Assuming the Riemann hypothesis, Caldwell and Cheng \cite{CaldwellCheng} determined $6850$ digits of the decimal expansion of Mills' constant which is approximately equal to $1.3063778838\cdots$. Proving the irrationality of a specific number satisfying \eqref{condition-Mills} has remained an unsolved problem for over 70 years since Mills' result. In this paper, we determine that Mills' constant is irrational.

Before stating the results, for every integer $c\geq 2$, we define $\xi_c$ as the smallest real number $\xi>1$  such that $\lfloor \xi^{c^k} \rfloor$ is a prime number for every $k\in \mathbb{N}$. The number $\xi_c$ exists for every integer $c\geq 2$ by Corollary~\ref{Corollary:Matomaki} (or \cite[Corollary 4]{Matomaki} and \cite[Remark~4.2]{SaitoTakeda}). A real algebraic integer greater than $1$ is a \textit{Pisot number} if all of its conjugates over $\mathbb{Q}$ except for itself lie in the open unit disc. We obtain the following results. 

\begin{theorem}\label{Theorem-MillsType}
For every integer $c\geq 4$, $\xi_c$ is transcendental.  
\end{theorem}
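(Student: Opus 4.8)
The plan is to assume, toward a contradiction, that $\xi_c$ is algebraic, and to play the primality of the values $\lfloor\xi_c^{c^k}\rfloor$ against known bounds on gaps between primes. First I would use the references cited above to pin down $\xi_c$ explicitly: one has $\xi_c=\lim_{k\to\infty}p_k^{1/c^k}$, where $(p_k)_{k\ge1}$ is the greedy sequence of primes, $p_1$ minimal and $p_{k+1}$ the least prime exceeding $p_k^{\,c}$. Writing $\xi_c^{c^k}=p_k+\varepsilon_k$ with $\varepsilon_k\in[0,1)$, the relation $\xi_c^{c^{k+1}}=(\xi_c^{c^k})^c$ gives $p_{k+1}=\lfloor(p_k+\varepsilon_k)^c\rfloor$; were $\varepsilon_k=0$ this would force $p_{k+1}=p_k^{\,c}$, which is composite, so $\varepsilon_k\neq0$ for all $k$. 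Expanding $(p_k+\varepsilon_k)^c$ and writing $g_k:=p_{k+1}-p_k^{\,c}\ge1$ for the prime gap above $p_k^{\,c}$ yields $c\,p_k^{c-1}\varepsilon_k=g_k+\varepsilon_{k+1}+O(p_k^{c-2}\varepsilon_k^2)$, and then a bound on primes in short intervals (Baker--Harman--Pintz: $g_k\ll(p_k^{\,c})^{21/40}$) produces the two-sided estimate $p_k^{\,1-c}\ll\varepsilon_k\ll p_k^{\,1-\frac{19}{40}c}$; in particular $\varepsilon_k$ decays like a fixed negative power of $p_k$ once $c\ge3$, hence doubly exponentially in $k$.

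Now assume $\xi_c$ is algebraic of degree $d$, with conjugates $\xi_c=\xi^{(1)},\dots,\xi^{(d)}$. The warm-up is $d=1$: since $1<\xi_c<2$ we have $\xi_c=p/q$ in lowest terms with $q\ge2$, whence $\varepsilon_k=(p^{c^k}\bmod q^{c^k})/q^{c^k}\ge q^{-c^k}$; comparing with $\varepsilon_k\ll p_k^{\,1-\frac{19}{40}c}$ and using $\log p_k\sim c^k\log\xi_c$ gives a contradiction as soon as $\tfrac{19}{40}c-1>\log q/\log\xi_c$, which holds for every $c\ge3$ (this is in effect the irrationality statement for $c=3$). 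For $d\ge2$ I would run a Galois--Liouville argument. After reducing to the case that $\xi_c$ is an algebraic integer whose house is $\xi_c$ itself — intuitively, $\xi_c$ cannot be a Salem number or have a conjugate of modulus exceeding $\xi_c$, since the former would prevent $\|\xi_c^{c^k}\|$ from decaying while the latter would make the relevant norms too large — one estimates $|\mathrm{Norm}(\xi_c^{c^k}-p_k)|=\varepsilon_k\prod_{i\ge2}\bigl|(\xi^{(i)})^{c^k}-p_k\bigr|\ll\varepsilon_k\,p_k^{\,d-1}$. Since this norm is a nonzero rational integer it is $\ge1$, so $\varepsilon_k\gg p_k^{\,1-d}$, and comparison with the upper bound forces $\tfrac{19}{40}c\le d+o(1)$.

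The crux — and the reason the statement requires $c\ge4$ rather than $c\ge3$ — is that this last inequality is not yet a contradiction, because $d$ is a priori unbounded; the decisive and hardest step is to obtain an a priori bound on $d$ (and on the Mahler measure of $\xi_c$). The natural route is to show that an algebraic $\xi_c$ with the prime-representing property is forced to be a Pisot number of small degree, and then to exploit the resulting structure: for $k$ large $p_k\in\{\mathrm{Tr}(\xi_c^{c^k}),\,\mathrm{Tr}(\xi_c^{c^k})-1\}$, a near-linear-recurrence sequence, whose divisibility and congruence properties at the highly divisible indices $c^k$ should contradict the primality of every $p_k$. Here $c\ge4$ supplies exactly the room needed: the prime-gap saving $\tfrac{19}{40}c>1$ is comfortably met, the faster decay of $\varepsilon_k$ sharpens the Liouville bound, and the factorization of $x^{c^k}-1$ is rich enough for the congruence obstruction to bite. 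I expect the main technical obstacle to be making the Pisot reduction rigorous — converting control of $\|\xi_c^{c^k}\|$ along the sparse set $\{c^k\}$ into global information about the conjugates of $\xi_c$.
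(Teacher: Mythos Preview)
Your outline matches the paper's strategy in broad strokes --- show $\|\xi_c^{c^k}\|$ decays exponentially, force a Pisot power, bound its degree, then kill the small-degree cases --- but the steps you flag as ``the crux'' and ``the main technical obstacle'' are genuinely unfilled, and your proposed elementary substitutes do not work.

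First, the Pisot reduction. Your Galois--Liouville norm bound $|\mathrm{Norm}(\xi_c^{c^k}-p_k)|\ge 1$ presupposes that $\xi_c$ is an algebraic integer whose house equals $\xi_c$; you call this a ``reduction'' but give no argument for it, and in fact it is essentially equivalent to what you are trying to prove. Control of $\|\xi_c^{c^k}\|$ only along the lacunary sequence $(c^k)$ does not, by any elementary device, constrain the other conjugates. The paper imports exactly the tool you are missing: a theorem of Corvaja--Zannier (reformulated by Dubickas), proved via the $p$-adic Schmidt subspace theorem, asserting that for algebraic $\alpha>1$ either some $\alpha^{s_m}$ is Pisot or $\|\alpha^{s_k}\|>e^{-\epsilon s_k}$ eventually. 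This is the non-elementary input you need and do not have.

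Second, once $\beta=\xi_c^{c^m}$ is Pisot of degree $\ell$, your norm inequality goes the wrong way (it gives $\ell\ge\tfrac{19}{40}c$, as you note). The paper bounds $\ell$ from \emph{above} by a different mechanism: a lemma of Dubickas gives $|\beta_2^n+\cdots+\beta_\ell^n|\ge|\beta_2|^n n^{-\lambda}$, and comparing this with the upper bound on $\|\beta^n\|$ yields $|\beta_2|\le\beta^{-c\theta_c}$ with $\theta_c=1-\tfrac{21}{40}-\tfrac1c$; then $|\beta_1\cdots\beta_\ell|\ge1$ forces $\ell\le 1+(c\theta_c)^{-1}$. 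For $c\ge5$ this already gives $\ell<2$, a contradiction; for $c=4$ it only gives $\ell\le2$, so a separate argument is needed to exclude quadratic Pisot $\beta$. The paper does this by a short trace/parity trick (for $c=4$ one finds even exponents $d_k$ with $\beta_1^{d_k}+\beta_2^{d_k}=p_k\le\beta_1^{d_k}$, forcing $\beta_2^{d_k}\le0$ with $d_k$ even), not by the vague ``divisibility and congruence properties'' you gesture at.

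Finally, your identification of $\xi_c$ with the limit of the \emph{greedy} prime sequence is not justified and the paper does not use it; instead it proves directly (via Matom\"aki's almost-all short-interval prime result) that the minimal $\xi_c$ satisfies $p_{k+1}\le p_k^{c}+p_k^{\theta c}$, which is all that the argument requires.
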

\begin{theorem}\label{Theorem-Mills}
Either $\xi_3$ is transcendental, or there exists a positive integer $m$ such that $\xi_3^{3^m}$ is a Pisot number of degree $3$. In particular, $\xi_3$ is irrational.  
\end{theorem}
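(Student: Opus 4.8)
The plan is to show that algebraicity of $\xi_3$ forces a very rigid structure, namely that some power $\xi_3^{3^m}$ is a Pisot number of degree $3$; irrationality of $\xi_3$ then follows at once, since a rational number is neither transcendental nor has a power equal to a Pisot number of degree $3$. Throughout I write $p_k=\lfloor \xi_3^{3^k}\rfloor$ (a prime, by \eqref{condition-Mills}) and $\delta_k=\xi_3^{3^k}-p_k\in[0,1)$.

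\emph{Step 1 (the basic estimates).} First I would record that minimality of $\xi_3$ forces, for all large $k$, $p_{k+1}$ to be the least prime exceeding $p_k^3$. Writing $q_k=p_{k+1}-p_k^3\ge 1$ and invoking the Baker--Harman--Pintz bound $p_{n+1}-p_n\ll p_n^{0.525}$ gives $1\le q_k\ll p_k^{1.575}$, and since $p_{k+1}=\lfloor(p_k+\delta_k)^3\rfloor=p_k^3+\lfloor 3p_k^2\delta_k+3p_k\delta_k^2+\delta_k^3\rfloor$ a comparison with $q_k$ yields $p_k^{-2}\ll\delta_k\ll p_k^{-0.425}$. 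In particular $\delta_k\to 0$, and the distance from $\xi_3^{3^k}$ to the nearest integer, which equals $\delta_k$ for large $k$, tends to $0$ at least geometrically in $3^k$. This is the only place $c=3$ is used, through the positivity of $0.425=3\cdot(1-0.525)-1$; for $c=2$ the analogous exponent is negative, which is why $c=2$ is out of reach.

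\emph{Step 2 (algebraic $\Rightarrow$ Pisot).} Now assume $\xi_3$ algebraic. The case $\xi_3\in\mathbb Z$ is immediate, since then $\xi_3^{3^k}$ is a proper power of an integer $\ge 2$, hence composite. For the general case the essential input is a theorem on rational approximation to powers of an algebraic number, proved through Schmidt's subspace theorem in the circle of ideas of Corvaja--Zannier: if $\alpha>1$ is algebraic and is neither a Pisot nor a Salem number, then for every $\varepsilon>0$ one has $\|\alpha^n\|>\alpha^{-\varepsilon n}$ for all sufficiently large $n$. Applied with $\alpha=\xi_3$ and $n=3^k$, this contradicts the geometric decay from Step 1, so $\xi_3$ is Pisot or Salem. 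The Salem alternative I would rule out by a Kronecker-type argument: the unit-circle conjugates of a Salem number have arguments $\phi_j$ with $\phi_j/2\pi$ irrational (otherwise $e^{i\phi_j}$ would be a root of unity, all of whose conjugates have modulus $1$, contradicting $|\xi_3|>1$), so the orbit of each $\phi_j/2\pi$ under multiplication by $3$ on $\mathbb R/\mathbb Z$ is infinite; then $\mathrm{Tr}(\xi_3^{3^k})-\xi_3^{3^k}=\sum_{i\ge2}(\xi_3^{(i)})^{3^k}=(1/\xi_3)^{3^k}+\sum_j 2\cos(3^k\phi_j)$ cannot stay close to $\mathbb Z$, whereas it must approach $-\lim\delta_k=0$ modulo $\mathbb Z$. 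Hence $\xi_3$ is a Pisot number.

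\emph{Step 3 (pinning down the degree).} I would then replace $\xi_3$ by $\eta=\xi_3^{3^m}$ with $m$ large enough that the descending chain of fields $\mathbb Q(\xi_3^{3^k})$ has stabilised; $\eta$ is again a Pisot number with $\lfloor\eta^{3^j}\rfloor$ prime for all $j\ge0$, and $[\mathbb Q(\eta^{3^k}):\mathbb Q]=d$ is independent of $k$. Set $P_k=\lfloor\eta^{3^k}\rfloor$, $\varepsilon_k=\eta^{3^k}-P_k$, and let $\eta=\eta^{(1)},\dots,\eta^{(d)}$ be the conjugates, with $|\eta^{(i)}|<1$ for $i\ge2$. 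For large $k$ both $\varepsilon_k$ and the $|(\eta^{(i)})^{3^k}|$ are $<1/2$, so the integer $\mathrm{Tr}(\eta^{3^k})=\eta^{3^k}+\sum_{i\ge2}(\eta^{(i)})^{3^k}$ must equal $P_k$, which gives the key identity $\sum_{i\ge2}(\eta^{(i)})^{3^k}=-\varepsilon_k$. Since $\varepsilon_k>0$, the dominant sub-conjugate cannot be real positive; nor can it be a complex pair $\rho e^{\pm i\phi}$, since $\{3^k\phi/2\pi\}$ cannot remain in the arc on which $\cos$ is negative (multiplication by $3$ maps that arc onto all of $\mathbb R/\mathbb Z$). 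So the largest sub-conjugate modulus $\rho$ is attained by a single real negative conjugate, whence $\varepsilon_k\asymp\rho^{3^k}$; with $\varepsilon_k\ll P_k^{-0.425}$ and $P_k\asymp\eta^{3^k}$ this forces $\rho\le\eta^{-0.425}$. On the other hand $|N(\eta)|=\eta\prod_{i\ge2}|\eta^{(i)}|$ is a nonzero integer, so $\prod_{i\ge2}|\eta^{(i)}|\ge\eta^{-1}$, while $\prod_{i\ge2}|\eta^{(i)}|\le\rho^{d-1}\le\eta^{-0.425(d-1)}$, and comparing exponents gives $d\le3$. Finally $d=1$ is excluded ($\varepsilon_k=0$, so $\eta^{3^k}$ is composite), and $d=2$ is excluded by a divisibility trick: then $(\eta^{(2)})^{3^{k+1}}=((\eta^{(2)})^{3^k})^3=(-\varepsilon_k)^3$, so $P_{k+1}=\mathrm{Tr}(\eta^{3^{k+1}})=(P_k+\varepsilon_k)^3-\varepsilon_k^3=P_k\bigl(P_k^2+3P_k\varepsilon_k+3\varepsilon_k^2\bigr)$, and since $P_{k+1}$ is an integer with $1<P_k<P_{k+1}$ this contradicts its primality. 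Hence $d=3$, which is the assertion, and irrationality of $\xi_3$ follows.

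\emph{Where the difficulty lies.} I expect essentially all of the depth to sit in Step 2: upgrading ``$\xi_3$ algebraic'' to ``$\xi_3$ Pisot'' is where Schmidt's subspace theorem (through a Corvaja--Zannier-type lower bound for $\|\alpha^n\|$) is indispensable, and the delicate point is to reconcile its qualitative conclusion with the explicit, prime-gap-driven rate for $\delta_k$ along the very sparse sequence $n=3^k$. Secondary technical points to be handled with care are the passage to a power $\xi_3^{3^m}$ with stabilised number field, the full Salem case when there are several pairs of circle conjugates, and the treatment of coincident sub-conjugate moduli in the degree estimate of Step 3 (for instance a pair $\pm\rho$ whose contributions to $\sum_{i\ge2}(\eta^{(i)})^{3^k}$ cancel, which must be ruled out by the same multiplication-by-$3$ equidistribution input).
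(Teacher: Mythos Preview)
Your four-step strategy coincides with the paper's: the decay estimate $\|\xi_3^{3^k}\|\ll\xi_3^{-0.425\cdot3^k}$ from minimality and Baker--Harman--Pintz (your Step~1 is the paper's Lemmas~\ref{Lemma-key2}--\ref{Lemma-Nearest}, and for $c=3$ your ``least prime above $p_k^3$'' argument is in fact cleaner than the paper's detour through Matom\"aki's Theorem~\ref{Theorem-Matomaki}, which is only needed for general $c_k\ge2$); then a Corvaja--Zannier input to force a Pisot power (Step~2 $\leftrightarrow$ the paper's appeal to Theorem~\ref{Theorem-Dubickas1} inside Lemma~\ref{Lemma-trans-or-Pisot}); then a norm-versus-subconjugate comparison to cap the degree at $3$; and finally the exclusion of degrees $1$ and $2$ (your divisibility trick is exactly the paper's Lemma~\ref{Lemma-not-Pisot}).

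The substantive gap is in the degree bound of Step~3. To obtain $\rho\le\eta^{-0.425}$ you need the \emph{lower} bound $\varepsilon_k\gg\rho^{3^k}$ (up to subexponential losses), and your route to it---identifying the dominant sub-conjugate as a single real negative number---does not close. The justification ``multiplication by $3$ maps that arc onto all of $\mathbb R/\mathbb Z$'' does not by itself prevent a forward orbit from staying in the arc; for a single complex pair the argument can in fact be salvaged (the only point of $(\tfrac14,\tfrac34)$ whose whole forward $\times3$-orbit stays there is $\tfrac12$, which your field-stabilisation excludes), but the coincident-modulus configurations you flag at the end are genuinely problematic. For instance, if the maximal-modulus sub-conjugates are $-\rho$ together with a complex pair $\rho e^{\pm i\phi}$, their combined contribution is $\rho^{3^k}\bigl(2\cos(3^k\phi)-1\bigr)$, which is negative precisely when $3^k\phi/2\pi\in(\tfrac16,\tfrac56)$; the set of points of this arc whose full forward $\times3$-orbit remains in it is an uncountable Cantor set, so no $\times3$-dynamics argument of this kind can work. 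The same difficulty undermines your ad~hoc Salem elimination in Step~2. The paper avoids all of this by quoting two results of Dubickas: Theorem~\ref{Theorem-Dubickas1} already delivers ``some $\xi_3^{3^m}$ is Pisot'' with no Salem case to treat, and Lemma~\ref{lemma-Dubickas2} supplies the uniform lower bound $\bigl|\beta_2^{\,n}+\cdots+\beta_\ell^{\,n}\bigr|\ge|\beta_2|^{n}n^{-\lambda}$, from which $|\beta_2|\le\beta^{-0.425}$ and hence $\ell\le3$ follow immediately without any case analysis of the sub-conjugate structure.
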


For more generalization, let $(c_k)_{k=1}^\infty$ be a sequence of real numbers with $c_{k}\geq 1$ for all $k\in \mathbb{N}$, and we define 
\[
\mathcal{W}(c_k)=\mathcal{W}((c_k)_{k=1}^\infty)= \{\xi >1\colon \lfloor \xi^{C_k} \rfloor \text{ is a prime number for every $k\in \mathbb{N}$}\}, 
\]
where $C_k=c_1\cdots c_k$ for all $k\in \mathbb{N}$.   
\begin{remark}
We always use the notation $\mathcal{W}(c_k)$ as $\mathcal{W}((c_k)_{k=1}^\infty)$ for simplicity, and so we remark that $\mathcal{W}(c_k)$ is independent of $k$. For instance, if $(c_k)_{k=1}^\infty=(3,3,3,\ldots)$, then
\[
\mathcal{W}(3)= \{\xi >1\colon \lfloor \xi^{3^k} \rfloor \text{ is a prime number for every $k\in \mathbb{N}$}\}. 
\]
\end{remark}
 An element $\xi \in \mathcal{W}(c_k)$ is called a \textit{prime-representing constant} (for short \textit{PRC}). After Mills' result \cite{Mills}, there are several studies for extending this result. For example, in 1950, Ku{\i}pers \cite{Kuipers} constructed an element in $\mathcal{W}(c)$ for all integers $c\geq 3$. In 1951, Ansari \cite{Ansari} extended the range of $c$ to all real numbers $c>77/29=2.655\cdots$. In the same period, Niven \cite{Niven} independently gave a similar extension, but he extended $c$ to real numbers $c>8/3=2.666\cdots$. Furthermore, in 1954, Wright \cite{Wright54} first studied the set of PRCs\footnote{In honor of Wright, the author uses the symbol ``$\mathcal{W}$" as the set of prime-representing constants. Wright actually considered a wider class of representing functions.}. We refer the readers to Dudley's survey \cite{Dudley} for more details. It is a good survey of the early research of Mills-type constants.


 Especially, by Wright's result \cite[Theorem~5]{Wright54}, $\mathcal{W}(c_k)$ is uncountable under suitable conditions on $(c_k)_{k=1}^\infty$. Therefore, there exist infinitely many transcendental numbers in $\mathcal{W}(c_k)$, but we do not get any arithmetic properties of the smallest element in $\mathcal{W}(c_k)$ from this result. Alkauskas and Dubickas \cite[Theorem~1]{AlkauskasDubickas} gave a result on the transcendency of specific PRCs. They succeeded in constructing a transcendental number in $\mathcal{W}(c_k)$ if the sequence $(c_k)_{k=1}^\infty$ satisfies
\begin{center}
(i) $c_1=1$; \quad (ii) $c_{k+1}>2.1053$ and $C_k\in \mathbb{N}$ for all $k\in \mathbb{N}$; \quad (iii) $\limsup_{k\rightarrow \infty} c_{k+1}=\infty$. 
\end{center}
They did not discuss whether the smallest element in $\mathcal{W}(c_k)$ is transcendental. The author and Takeda \cite[Theorem~1.3]{SaitoTakeda} showed the following result. 
\begin{theorem}\label{Theorem:STakeda}
Let $(c_k)_{k=1}^\infty$ be a sequence of positive integers satisfying 
\begin{enumerate}\renewcommand{\theenumi}{\roman{enumi}}
\renewcommand{\labelenumi}{(\theenumi)} 
\item $c_1\geq 1$\textup{;}  
\item $c_{k+1}\geq 2$ for all $k\in \mathbb{N}$\textup{;}
\item \label{Condition:STakeda}$\limsup_{k\rightarrow \infty} c_{k+1}=\infty$.
\end{enumerate}
Then, the smallest element in $\mathcal{W}(c_k)$ exists and it is transcendental.
\end{theorem}
The author and Takeda also obtained a result on the algebraic independency of PRCs in \cite{SaitoTakeda}. The condition \eqref{Condition:STakeda} is essentially needed in these works, but we accomplish replacing \eqref{Condition:STakeda} with the condition $c_{k+1}\geq 4$ for infinitely many positive integers $k$ as follows.

\begin{theorem}\label{Theorem-main1} Let $b$ be an integer greater than or equal to $3$. Let $(c_k)_{k\in\mathbb{N}}$ be a sequence of integers satisfying
\begin{enumerate}
\item \label{Condition-c1}$c_1\geq 1$\textup{;}
\item \label{Condition-c2}$c_{k+1}\geq 2$ for all $k\in \mathbb{N}$\textup{;}
\item \label{Condition-c3}$c_{k+1}=  b$ for infinitely many positive integers $k$.
\end{enumerate}
Then the smallest real number in $\mathcal{W}(c_k)$ exists, say $\xi$. For this $\xi$ the following is true:
\begin{itemize}
\item if $b\geq 4$, then $\xi$ is transcendental;
\item if $b=3$, then either $\xi$ is transcendental, or $\xi ^{C_m}$ is a Pisot number of degree $3$ for some $m\in \mathbb{N}$.
\end{itemize}
\end{theorem}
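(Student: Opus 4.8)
The plan is to prove Theorem~\ref{Theorem-main1}, of which Theorems~\ref{Theorem-MillsType} and~\ref{Theorem-Mills} are the special cases in which the sequence is constant, $c_1=c_2=\cdots=b$ (with $b=c\ge 4$ in Theorem~\ref{Theorem-MillsType} and $b=3$ in Theorem~\ref{Theorem-Mills}). First I would fix notation and record the basic structure. Combining \cite[Corollary~4]{Matomaki} (which supplies the primes needed at the steps with $c_{k+1}=2$) with the nesting construction behind \cite[Remark~4.2]{SaitoTakeda}, I would show that $\xi:=\inf\mathcal{W}(c_k)$ is attained; that $p_k:=\lfloor\xi^{C_k}\rfloor$ is prime with $p_k^{c_{k+1}}\le p_{k+1}<(p_k+1)^{c_{k+1}}$; and that, by minimality of $\xi$, $p_{k+1}$ is the least prime $\ge p_k^{c_{k+1}}$. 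Now fix $k$ with $c_{k+1}=b$. Since $[x,x+x^{0.525}]$ contains a prime for all large $x$, the least prime above $p_k^{b}$ exceeds $p_k^{b}$ by at most $p_k^{0.525\,b}$, and because $b\ge 3$ this exponent is $<b-1$; expanding $(p_k+\delta_k)^b=\xi^{C_{k+1}}=p_{k+1}+\delta_{k+1}$ with $\delta_k:=\xi^{C_k}-p_k\in[0,1)$ and $\delta_{k+1}\in[0,1)$ then yields
\[
\delta_k\ \ll_b\ p_k^{\,0.525\,b-(b-1)}\ =\ p_k^{-\kappa_b},\qquad \kappa_b:=0.475\,b-1>0 .
\]
As $p_k\asymp\xi^{C_k}$, this says that $\xi^{C_k}$ approaches an integer exponentially fast along the infinite set $S:=\{k\colon c_{k+1}=b\}$. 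This is the sole place where $b\ge 3$ is used to produce smallness, and it is the lever for everything that follows.

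Assume now that $\xi$ is algebraic, with minimal polynomial $Q$ over $\mathbb Q$ of degree $d$ and conjugates $\xi=\xi_1,\dots,\xi_d$. The Diophantine input is the identity
\[
\operatorname{Res}\!\bigl(x^{C_k}-p_k,\,Q\bigr)\ =\ \pm\,\operatorname{lc}(Q)^{C_k}\prod_{j=1}^{d}\bigl(p_k-\xi_j^{C_k}\bigr),
\]
whose left-hand side is a rational integer and is nonzero for all large $k$: indeed $\xi^{C_k}=p_k+\delta_k\ne p_k$ (else $\xi$ would have degree $C_k$), and for $j\ge2$ the relation $\xi_j^{C_k}=p_k$ forces $|\xi_j|=p_k^{1/C_k}$, which can hold for at most one $k$. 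Hence $\prod_{j}|p_k-\xi_j^{C_k}|\ge\operatorname{lc}(Q)^{-C_k}$; the factor with $j=1$ equals $\delta_k\ll p_k^{-\kappa_b}$ along $S$, so the remaining product is forced to be large, and bounding those factors from above by powers of $\max_j|\xi_j|$ and letting $C_k\to\infty$ yields strong constraints on $\operatorname{lc}(Q)$ and on the moduli $|\xi_j|$. Iterating this — passing from $\xi$ to $\xi^{C_m}$ for larger and larger $m$, so that the subdominant conjugates get raised to higher powers — I would push these constraints to the conclusion that, for some $m$, $\eta:=\xi^{C_m}$ is an algebraic integer all of whose conjugates other than $\eta$ lie in the closed unit disc; ruling out conjugates on the unit circle (a Salem-type configuration, excluded because then $\eta^{\,n}$, hence $\xi^{C_k}$ for $k\ge m$, would fail to approach the integers along $S$) leaves $\eta$ a Pisot number. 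Then for $k\ge m$ we have $\xi^{C_k}=\eta^{\,n_k}$ with $n_k:=C_k/C_m\in\mathbb N$, $n_{k+1}=c_{k+1}n_k$, and $c_{k+1}=b$ for infinitely many $k$.

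It remains to show that a Pisot number $\eta$ of degree $d$ can be compatible with both the prime-representing property and the minimality of $\xi$ only if $b=3$ and $d=3$. Let $P_n:=\sum_{j}\eta_j^{\,n}\in\mathbb Z$ be the associated linear recurrence sequence; then $\lfloor\eta^{\,n}\rfloor\in\{P_n-1,P_n\}$, and the constraint is that $\lfloor\eta^{\,n_k}\rfloor$ be prime for all $k\ge m$ while $\eta^{1/C_m}$ is the smallest real number having this property. Here my plan is a case analysis exploiting: (a) the relation $n_{k+1}=b\,n_k$, valid for infinitely many $k$, which expresses $\lfloor\eta^{\,bn_k}\rfloor$ as an explicitly controlled perturbation, by the subdominant conjugates, of a quantity built from $\lfloor\eta^{\,n_k}\rfloor$; (b) the eventual periodicity of $n\mapsto P_n\bmod \ell$ for small primes $\ell$, which forces $\ell\mid\lfloor\eta^{\,n_k}\rfloor$ unless the residues $n_k\bmod(\text{period})$ dodge fixed classes, a condition hard to sustain because the $n_k$ are partial products of the $c_j$; and (c) size inequalities among $\eta$ and its conjugates forced by minimality of $\xi$. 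Pushing through the bookkeeping, I expect these to rule out $d=1$ and $d=2$ outright and to leave $d=3$ viable only when $b=3$; in particular this should be impossible whenever $b\ge 4$, giving transcendence there, and the dichotomy when $b=3$ (and hence irrationality, since a degree-$3$ Pisot number is not rational). The main obstacle will be exactly this incompatibility analysis — quantifying how the subdominant-conjugate error interacts with primality over the sparse exponent set $\{n_k\}$ — with the reduction to the Pisot case in the previous paragraph a close second, since the resultant inequality is lossy and must be iterated through the powers $\xi^{C_m}$ before it forces $\operatorname{lc}(Q)=1$ and confines the remaining conjugates to the closed unit disc.
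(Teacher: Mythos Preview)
Your outline correctly identifies the two stages---first, use minimality together with Baker--Harman--Pintz to force $0\le\xi^{C_k}-p_k\ll p_k^{-\kappa_b}$ along $S=\{k:c_{k+1}=b\}$; second, exploit this to constrain the algebraic nature of $\xi$---and your $\kappa_b=0.475\,b-1$ is exactly the paper's $b\theta_b$. But the second stage contains a genuine gap.

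The resultant/Liouville inequality you propose is too weak to force the Pisot conclusion. With $Q\in\mathbb Z[x]$ the minimal polynomial of $\xi$, leading coefficient $a$ and degree $d$, nonvanishing of $\operatorname{Res}(x^{C_k}-p_k,Q)$ gives only
\[
\delta_k\ \ge\ |a|^{-C_k}\prod_{j\ge2}|\xi_j^{C_k}-p_k|^{-1},
\]
and since each factor on the right is at best $(2\max(|\xi_j|,\xi)^{C_k})^{-1}$ you obtain $\delta_k\gg(|a|\,\xi^{d-1}\prod_{|\xi_j|>\xi}|\xi_j|/\xi)^{-C_k}$. Comparing with $\delta_k\ll\xi^{-\kappa_b C_k}$ yields $\xi^{\kappa_b}\le|a|\,\xi^{d-1}\prod_{|\xi_j|>\xi}(|\xi_j|/\xi)$, which for $b=3$ (so $\kappa_b<1$) is satisfied by \emph{every} algebraic $\xi>1$ of degree $\ge2$, and even for large $b$ fails to exclude non-integers ($|a|\ge2$) or conjugates of modulus exceeding $\xi$. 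Replacing $\xi$ by $\xi^{C_m}$ does not help: the conjugates of $\xi^{C_m}$ lie among the $\xi_j^{C_m}$, the leading coefficient does not shrink, and the same inequality reproduces itself. The paper handles this step by invoking Dubickas's theorem (its Theorem~\ref{Theorem-Dubickas1}), which rests on the Corvaja--Zannier application of the $p$-adic Schmidt Subspace Theorem: for algebraic $\alpha>1$, either some $\alpha^{s_m}$ is Pisot or $\|\alpha^{s_k}\|>e^{-\epsilon s_k}$ eventually for every $\epsilon>0$. A Liouville-type argument cannot substitute for this; the Subspace Theorem is the essential input you are missing.

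Once the Pisot reduction is in hand, the paper's endgame is also sharper than your (a)--(c). Writing $\beta=\xi^{C_m}$ of degree $\ell$ with conjugates $\beta_1>1>|\beta_2|\ge\cdots$, the bound $\|\beta^{C_k/C_m}\|\ll\beta^{-\kappa_b C_k/C_m}$ is matched against Dubickas's lower bound $|\sum_{j\ge2}\beta_j^{\,n}|\ge|\beta_2|^n n^{-\lambda}$ to force $|\beta_2|\le\beta_1^{-\kappa_b}$, and then $|\beta_1\cdots\beta_\ell|\ge1$ gives the clean degree bound $\ell\le\kappa_b^{-1}+1$. This disposes of $b\ge5$ outright and leaves only $\ell=2$ for $b=4$ and $\ell\in\{2,3\}$ for $b=3$; the case $\ell=2$ is then eliminated by explicit Newton-sum identities (for $b=3$ one gets $p_{k+1}=p_k^3-3(\beta_1\beta_2)^{n_k}p_k$, so $p_k\mid p_{k+1}$; for $b=4$ an even exponent forces $\beta_2^{\,n_k}\le0$, impossible for real $\beta_2$). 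Your periodicity-mod-$\ell$ idea does not obviously produce either of these contradictions.

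A smaller point: ``by minimality of $\xi$, $p_{k+1}$ is the least prime $\ge p_k^{c_{k+1}}$'' is not automatic---choosing a smaller prime at step $k+1$ might, a priori, fail to extend to an infinite admissible sequence. The paper secures only the weaker (and sufficient) bound $p_{k+1}\le p_k^{c_{k+1}}+p_k^{\theta c_{k+1}}$, and even that requires a Matom\"aki-style argument (its Lemma~\ref{Lemma-ModifiedMatomaki}) showing that any prime in that window can be continued indefinitely, including through steps with $c_{j+1}=2$.
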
 

Theorem~\ref{Theorem-main1} immediately implies Theorems~\ref{Theorem-MillsType} and \ref{Theorem-Mills}. We will prove Theorem~\ref{Theorem-main1} in Section~\ref{Section-Proof}. At the end of this section, let us ask several remaining questions.

\begin{question}\label{Question-1}
Is $\xi_2$ rational or irrational?
\end{question}
\begin{question}\label{Question-2}
Does there exist $m\in \mathbb{N}$ such that $\xi_3^{3^m}$ is a Pisot number of degree $3$?
\end{question}

If the answer to Question~\ref{Question-2} is ``no'', then $\xi_3$ is transcendental by Theorem~\ref{Theorem-Mills}. In Section~\ref{Section-Further}, we will give assumptions leading to the transcendency of $\xi_3$. We will explain the main difficulty of Question~\ref{Question-2} in Remark~\ref{Remark-b=3}.

\section{Strategy of our proof}\label{Section-Strategy}

In this section, we demonstrate a strategy of our proof. We first recall Mills' construction, that is, we now construct $\xi>1$ such that $\lfloor \xi^{3^k} \rfloor$ is a prime number for every $k\in \mathbb{N}$. 

Let $\mathcal{P}$ be the set of all prime numbers, and let $\theta=21/40$. We apply the following theorem given by  Baker, Harman, and Pintz \cite[p.562]{BakerHarmanPintz}\footnote{Mills applied the result given by Ingham \cite{Ingham} which states that $p_{n+1}-p_n < Kp_n^{5/8}$ for some positive constant $K$. Thus, the construction of this section is modernly modified. }.
\begin{theorem}\label{Theorem-BakerHarmanPintz} There exists a real number $d_0>0$ such that
\[
 \# ([x, x+x^{\theta}] \cap \mathcal{P})  \geq \frac{d_0 x^\theta}{\log x}
 \]
for sufficiently large $x>0$.
\end{theorem}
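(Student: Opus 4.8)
The statement is, up to the harmless substitution $\theta = 21/40 = 0.525$, the main theorem of Baker, Harman, and Pintz on the difference between consecutive primes, so the plan is simply to cite \cite{BakerHarmanPintz}; a self-contained proof is entirely out of scope here. (Their Theorem~1 is usually phrased as ``$[x - x^{0.525}, x]$ contains a prime for all large $x$'', but the quantitative lower bound $\gg x^{\theta}/\log x$ on the count of such primes is what their method actually delivers.) For orientation I will nonetheless sketch the shape of the argument.

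One studies $\psi(x) - \psi(x - y)$ with $y = x^{\theta}$ and aims for a lower bound $\gg y$. Expanding the von Mangoldt function by a combinatorial identity (Heath-Brown's identity), one reduces to bounding bilinear sums $\sum_{mn \in [x - y,\, x]} a_m b_n$ with bounded arithmetic coefficients, split according to the sizes of $m$ and $n$ into ``Type~I'' sums (one variable long and smooth) and ``Type~II'' sums (both variables of intermediate size). The Type~I range is controlled by classical methods resting on zero-density estimates for $\zeta$; the decisive input is Harman's sieve (the ``alternative sieve''), which permits one to discard the genuinely difficult Type~II ranges at a controlled cost, combined with sharp mean-value and large-value estimates for Dirichlet polynomials — in particular Watt's mean-value theorem and the Huxley--Heath-Brown zero-density machinery.

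The technical heart, and the part I would expect to be the real obstacle, is the intricate bookkeeping showing that the sieve identities leave a \emph{positive} main term after all the Type~I/Type~II decompositions; this amounts to a numerical optimization over many parameters and is exactly what pins the exponent down to $0.525$. Since our later arguments use only the qualitative consequence that $[x, x + x^{21/40}]$ contains $\gg x^{21/40}/\log x$ primes for large $x$ — and are in no way sensitive to the precise exponent, any fixed $\theta < 1$ admitting the analogous bound being enough — we take Theorem~\ref{Theorem-BakerHarmanPintz} as a black box.
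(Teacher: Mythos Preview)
Your proposal is correct and matches the paper exactly: the paper does not prove Theorem~\ref{Theorem-BakerHarmanPintz} at all, but simply quotes it from \cite{BakerHarmanPintz} and uses it as a black box, so your citation (with the optional sketch) is precisely what is wanted. One small inaccuracy in your closing remark: it is \emph{not} true that ``any fixed $\theta<1$'' would suffice for the later arguments---the paper genuinely needs $\theta$ below certain thresholds (e.g.\ $\theta<2/3$ for $\theta_b>0$ when $b=3$, and the numerics in Lemma~\ref{Lemma-trans-or-Pisot} rely on $\theta=21/40$ specifically)---but this does not affect the correctness of treating the theorem itself as a quoted result.
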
 
Let $p_1$ be a sufficiently large prime number. By Theorem~\ref{Theorem-BakerHarmanPintz} with $x=p_1^3$, we find $p_2\in \mathcal{P}$ such that 
\[
p_1^3 \leq p_2 \leq p_1^3 +p_1^{3\theta}. 
\] 
By $3\theta= 63/40<2$, we see that 
\[
p_1^3 +p_1^{3\theta}< p_1^3+3p_1^2 +3p_1 = (p_1+1)^3-1.
\]
Similarly,  by Theorem~\ref{Theorem-BakerHarmanPintz} with $x=p_2^3$,  we find $p_3\in \mathcal{P}$ such that 
\[
p_2^3 \leq p_3 \leq p_2^3 +p_2^{3\theta}< (p_2+1)^3-1 . 
\] 
By iterating this argument, we find a sequence $(p_k)_{k=1}^\infty$ of prime numbers such that 
\begin{equation}\label{eq:cubic}
p_k^3 \leq p_{k+1} \leq  p_k^3 +p_k^{3\theta}< (p_k+1)^3-1 
\end{equation}
for every $k\in \mathbb{N}$. This leads to 
\[
p_1^{1/3^1} \leq p_2^{1/3^2} \leq p_3^{1/3^3} \leq \cdots < (p_3+1)^{1/3^3} < (p_2+1)^{1/3^2}<(p_1+1)^{1/3^1},
\]
and hence $\lim_{k\to\infty} p_k^{1/3^k}=\lim_{k\to\infty} (p_k+1)^{1/3^k} \eqqcolon w$ exists. Therefore, for every $k\in \mathbb{N}$, we have $p_k \leq w^{3^k} <p_k+1$, which implies that $p_k = \lfloor w^{3^k} \rfloor$ for every $k\in \mathbb{N}$. \\

We will further show the following result for giving a sketch of the proof.  
\begin{theorem}\label{Theorem-simple}
The number $w$ is irrational. 
\end{theorem}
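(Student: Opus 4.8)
The plan is to argue by contradiction, so suppose $w=a/q$ with coprime positive integers $a$ and $q$. First, if $q=1$ then $w$ is an integer, necessarily $\geq 2$, and $p_1=\lfloor w^3\rfloor=w^3$ is a perfect cube, hence not prime; so $q\geq 2$. Then $w^{3^k}=a^{3^k}/q^{3^k}$ is never an integer, and writing $\varepsilon_k:=w^{3^k}-p_k\in(0,1)$ for its fractional part we have $\varepsilon_k=r_k/q^{3^k}$ with $r_k:=a^{3^k}\bmod q^{3^k}$, and $r_k\geq 1$ because $\gcd(a,q)=1$ forces $q^{3^k}\nmid a^{3^k}$. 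In particular $\varepsilon_k\geq q^{-3^k}$ for all $k$.

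For a matching upper bound I would use the cubing dynamics. From $w^{3^{k+1}}=(p_k+\varepsilon_k)^3$ and $\lfloor w^{3^{k+1}}\rfloor=p_{k+1}$ one reads off $p_{k+1}-p_k^3=\lfloor 3p_k^2\varepsilon_k+3p_k\varepsilon_k^2+\varepsilon_k^3\rfloor$, which by \eqref{eq:cubic} is at most $p_k^{3\theta}$; hence $3p_k^2\varepsilon_k<p_k^{3\theta}+1$ and therefore $\varepsilon_k<p_k^{3\theta-2}$ for $k$ large (note $3\theta-2=-17/40<0$). Since $p_k>w^{3^k}-1>\tfrac12 w^{3^k}$ once $k$ is large, this yields $\varepsilon_k\ll (w^{2-3\theta})^{-3^k}$. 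Together with $\varepsilon_k\geq q^{-3^k}$ this makes $(w^{2-3\theta}/q)^{3^k}$ bounded in $k$, which forces $q\geq w^{2-3\theta}=w^{17/40}$.

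This is not yet a contradiction, so next I would extract more from the recursion itself. Cubing the identity $a^{3^k}=q^{3^k}p_k+r_k$ and comparing with $a^{3^{k+1}}=q^{3^{k+1}}p_{k+1}+r_{k+1}$ gives the integral relation $r_{k+1}=r_k^3+q^{3^k}m_k$, where $m_k=3p_k r_k a^{3^k}-q^{2\cdot 3^k}\delta_k\in\mathbb{Z}$ and $\delta_k:=p_{k+1}-p_k^3$. For $k$ large, $q$ is coprime to $a$, to $r_k$, and to the prime $p_k$, so $v_\ell(3p_k r_k a^{3^k})\leq 1$ for every prime $\ell\mid q$, whereas divisibility by $q^{2\cdot 3^k}$ would require $v_\ell\geq 2\cdot 3^k$; hence $q^{2\cdot 3^k}\nmid m_k$, and in particular $m_k\neq 0$, i.e.\ $|r_{k+1}-r_k^3|\geq q^{3^k}$. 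On the other hand the upper bound above gives $r_j\ll (q/w^{2-3\theta})^{3^j}$ for all $j$, so $|r_{k+1}-r_k^3|\ll (q^3/w^{3(2-3\theta)})^{3^k}$; comparing the two estimates yields $q^2\geq w^{3(2-3\theta)}$, that is $q\geq w^{3(2-3\theta)/2}=w^{51/80}$, strictly better than before.

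The remaining step, which I expect to be the real obstacle, is to convert this into a genuine contradiction. The obvious hope is to iterate: a lower bound of the form $|m_k|\geq q^{\alpha\cdot 3^k}$ with a fixed $\alpha>0$ would, by the same comparison, improve the exponent of $w$ and, on iteration, force $q\geq w^{c_n}$ with $c_n\to\infty$, which is impossible for the fixed positive integer $q$ since $w>1$. The crux is precisely to obtain such a lower bound: the prime-gap input \eqref{eq:cubic} alone only gives $|m_k|\geq 1$, and a priori $m_k$ could be a small integer coprime to $q$. I would expect the resolution to use the primality of $p_k$ in an essential way --- the fact that $p_k=(a^{3^k}-r_k)/q^{3^k}$ is prime rigidly constrains the factorisation of $a^{3^k}-r_k$, and this rigidity should clash with the near-cubing recurrence $r_{k+1}\equiv r_k^3\pmod{q^{3^k}}$ once $q$ is as large as forced above --- and making that conflict quantitative is the heart of the matter.
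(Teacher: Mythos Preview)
Your argument is incomplete, and you yourself flag the gap: after obtaining $q\geq w^{51/80}$ you say the ``remaining step\ldots is the real obstacle'' and outline a hoped-for iteration without carrying it out. The iteration you sketch does not close. Your lower bound on $|r_{k+1}-r_k^3|$ comes only from $|m_k|\geq 1$, and nothing in the primality of $p_k$ or the recursion forces $|m_k|\geq q^{\alpha\cdot 3^k}$ for a fixed $\alpha>0$; both terms in $m_k=3p_kr_ka^{3^k}-q^{2\cdot 3^k}\delta_k$ are of size $\asymp (q^2 w^{3\theta})^{3^k}$ and can cancel almost completely. So the bootstrapping stalls at a finite exponent and never yields a contradiction for arbitrary $q$.

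The paper bypasses this entirely by invoking Mahler's 1957 theorem: for any non-integral rational $r/s>1$ and any $\epsilon>0$ one has $\|(r/s)^n\|>e^{-\epsilon n}$ for all large $n$. This is a deep Diophantine input (it rests on Ridout's $p$-adic extension of Roth's theorem) and is strictly stronger than the trivial bound $\|(r/s)^n\|\geq s^{-n}$ that you are using. With Mahler's theorem in hand the proof is immediate: Lemma~\ref{Lemma-simpleNear} gives $\|w^{3^k}\|\leq e^{-\gamma\cdot 3^k}$ for some fixed $\gamma>0$, while Mahler with $\epsilon=\gamma$ gives $\|w^{3^k}\|>e^{-\gamma\cdot 3^k}$ for large $k$, a contradiction. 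In other words, the missing idea in your attempt is precisely that the fractional parts of powers of a fixed rational cannot decay at \emph{any} exponential rate, not merely at rate $s^{-n}$; this is not accessible by the elementary congruence and size manipulations you set up.
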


\begin{remark}
We actually want to discuss the arithmetic properties of $\xi_3(=\min \mathcal{W}(3))$. There is a gap between $w$ and $\xi_3$, but we can apply a similar way of $w$ to $\xi_3$ (see Lemmas~\ref{Lemma-key1} and \ref{Lemma-key2}).  
\end{remark}

To prove Theorem~\ref{Theorem-simple}, we estimate the distance between $w^{3^k}$ and the nearest integer. 

\begin{lemma}\label{Lemma-simpleNear}
There exists $\gamma>0$ such that for every sufficiently large $k$ 
\[
|w^{3^{k}} -p_k|  \leq e^{-3^{k} \gamma}.
\]
\end{lemma}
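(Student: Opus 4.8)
The plan is to exploit the gap between consecutive terms in the recursion \eqref{eq:cubic}. Recall $w = \lim_{k\to\infty} p_k^{1/3^k}$, so $w^{3^k} = \lim_{j\to\infty} p_{k+j}^{1/3^j}$. The key estimate is to control how much $p_{k+1}^{1/3}$ can exceed $p_k$: from \eqref{eq:cubic} we have $p_k^3 \le p_{k+1} \le p_k^3 + p_k^{3\theta}$, so
\[
p_k \le p_{k+1}^{1/3} \le \bigl(p_k^3 + p_k^{3\theta}\bigr)^{1/3} = p_k\bigl(1 + p_k^{3\theta-3}\bigr)^{1/3} \le p_k + p_k^{3\theta - 2},
\]
using $(1+t)^{1/3} \le 1 + t/3$ and $3\theta - 3 < 0$. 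Since $3\theta - 2 = 63/40 - 2 = -17/40 < 0$, iterating this shows $p_{k+j}^{1/3^j}$ stays within a geometrically decaying window of $p_k$. Passing to the limit $j \to \infty$ gives a bound of the form $0 \le w^{3^k} - p_k \le C p_k^{3\theta-2}$ for some absolute constant $C$; since $p_k \ge p_1^{3^{k-1}}$ (as $p_{k+1} \ge p_k^3$), we get $w^{3^k} - p_k \le C p_1^{3^{k-1}(3\theta-2)} = C (p_1^{(2-3\theta)/3})^{-3^k}$. Taking $\gamma$ slightly smaller than $\tfrac{1}{3}(2-3\theta)\log p_1 > 0$ absorbs the constant $C$ for large $k$, yielding $|w^{3^k} - p_k| \le e^{-3^k\gamma}$.

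So the steps in order are: first, derive the one-step inequality $p_k \le p_{k+1}^{1/3} \le p_k + p_k^{3\theta-2}$ from \eqref{eq:cubic} via a binomial/mean-value estimate; second, iterate it to bound $p_{k+j}^{1/3^j} - p_k$ by a convergent geometric-type sum, uniformly in $j$, and let $j\to\infty$ to conclude $0 \le w^{3^k} - p_k \le C p_k^{3\theta - 2}$; third, use the doubly-exponential lower bound $p_k \ge p_1^{3^{k-1}}$ to convert the polynomial-in-$p_k$ decay into the doubly-exponential decay $e^{-3^k\gamma}$, choosing $\gamma$ appropriately and discarding finitely many $k$.

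The main obstacle — really just a bookkeeping point rather than a deep one — is the iteration in the second step: one must check that when comparing $p_{k+j}^{1/3^j}$ with $p_{k+j-1}^{1/3^{j-1}}$, the error terms $p_{k+i}^{3\theta-2}$ at each stage, after taking the appropriate further roots, form a summable series whose total is still of order $p_k^{3\theta-2}$. This works because $p_{k+i}$ grows doubly exponentially in $i$, so the $i$-th error contribution is crushed far faster than geometrically; the resulting sum is dominated by its first term up to an absolute constant. Once that uniform-in-$j$ bound is in hand, the limit and the final exponential rewriting are routine. This lemma then feeds directly into the proof of Theorem~\ref{Theorem-simple}: an irrational-exponent argument will show that if $w$ were rational, the quantities $w^{3^k}$ could not approach integers this fast without eventually being integers, contradicting $p_k < w^{3^k} < p_k+1$ strictly (or forcing a contradiction with the growth of denominators).
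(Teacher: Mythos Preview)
Your argument is correct, but the paper reaches the same intermediate bound $|w^{3^k}-p_k|\le K_0\,p_k^{3\theta-2}$ by a shorter route that avoids the iteration entirely. The trick you missed is that $p_{k+1}=\lfloor w^{3^{k+1}}\rfloor$ already gives $w^{3^{k+1}}<p_{k+1}+1$, so combining this with \eqref{eq:cubic} yields
\[
p_k^{3}\le w^{3^{k+1}}\le p_k^{3}+p_k^{3\theta}+1\le p_k^{3}\bigl(1+2p_k^{3(\theta-1)}\bigr),
\]
and a \emph{single} cube root plus the expansion $(1+x)^{1/3}=1+O(x)$ delivers $p_k\le w^{3^k}\le p_k+O(p_k^{3\theta-2})$ directly. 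Your telescoping sum $\sum_i\bigl(p_{k+i}^{1/3^i}-p_{k+i-1}^{1/3^{i-1}}\bigr)$ achieves the same thing, and your observation that the terms decay doubly exponentially (so the sum is dominated by its first term) is sound; it is simply more work than necessary once one remembers that $w^{3^{k+1}}$ is already trapped in $[p_{k+1},p_{k+1}+1)$. After that point the two proofs coincide: both use $p_k\ge p_1^{3^{k-1}}$ to convert the $p_k^{3\theta-2}$ bound into $e^{-3^k\gamma}$.
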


\begin{proof}
Since $p_{k+1}=\lfloor w^{3^{k+1}}\rfloor$ for every $k\in \mathbb{N}$,  \eqref{eq:cubic} leads to 
\[
p_k^3 \leq p_{k+1} \leq w^{3^{k+1}}  \leq  p_k^3 +p_k^{3\theta}+1\leq p_k^3 (1 +2 p_k^{3(\theta-1)} )
\]
for every $k\in \mathbb{N}$. Therefore,  
\[
p_k \leq w^{3^{k}}  \leq p_k  (1+2p_k^{3(\theta-1)})^{1/3}< p_k +2p_k^{3\theta-2}, 
\]
where the last inequality follows from $(1+x)^{1/3}<1+x$ for all $x>0$. 
Thus, for every $k\in \mathbb{N}$, we have 
\[
 |w^{3^{k}} -p_k| \leq 2 p_k^{3\theta-2}.
\]
Combining $3\theta-2=63/40-2<0$ and the first inequality of \eqref{eq:cubic}, we obtain 
\[
|w^{3^{k}} -p_k| \leq 2 p_k^{3\theta-2}\leq 2 p_{k-1}^{3(3\theta-2)} \leq \cdots \leq 2 p_1^{3^{k-1} (3\theta -2) },
\]
which completes the proof of Lemma~\ref{Lemma-simpleNear} by selecting $\gamma=\frac{2-3\theta}{6}\log p_1>0$.
\end{proof}

Lemma~\ref{Lemma-simpleNear} means that $w^{3^{k}}$ is very close to some integer (especially, prime number). On the other hand, the powers of a rational number are at a distance from integers. Indeed, in 1957, Mahler \cite{Mahler} proved the following result. Here, for every $x\in \mathbb{R}$, we define 
\[
\|x\|= \min \{|x-m| \colon m\in \mathbb{Z} \}.
\]
\begin{theorem}\label{Theorem-Mahler} Let $\alpha$ be a rational number strictly greater than $1$ and which is not an integer. Let $\epsilon$ be a positive real number. Then, there exists an integer $n_0=n_0(\alpha,\epsilon)$ such that
\[
\|\alpha^n \| > e^{-\epsilon n}
\]
for every integer $n\ge n_0$.
\end{theorem}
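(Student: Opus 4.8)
The plan is a contradiction argument built on the recursion satisfied by the integer numerators of $(r/s)^n$, with the Diophantine core isolated at the end. After cancelling common factors I may assume $\gcd(r,s)=1$ and $s\ge 2$; since $\|(r/s)^n\|\ge s^{-n}$ trivially, the claim is immediate for $\epsilon>\log s$ and, by applying that with a slightly smaller value of $\epsilon$, also for $\epsilon=\log s$, so I may assume $0<\epsilon<\log s$. Write $\xi=r/s$, let $a_n$ be the nearest integer to $\xi^n$, and set $\eta_n=\xi^n-a_n$, $\delta_n=\|\xi^n\|=|\eta_n|$ and $N_n=s^n\eta_n=r^n-s^na_n\in\mathbb{Z}$. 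For every prime $\ell\mid s$ one has $v_\ell(r^n)=0<n\le v_\ell(s^na_n)$, so $\gcd(N_n,s)=1$; in particular $N_n\ne 0$, which is the source of the trivial bound above.

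The recursion comes from $r^{n+1}=r\cdot r^n$: it reads $N_{n+1}=rN_n-s^nm_n$ with $m_n:=sa_{n+1}-ra_n=r\eta_n-s\eta_{n+1}\in\mathbb{Z}$, so $|m_n|\le (r+s)/2$. Put $\tau:=1/(r+s)$. If $\delta_n<\tau$ and $\delta_{n+1}<\tau$ then $|m_n|\le r\delta_n+s\delta_{n+1}<1$, hence $m_n=0$; thus $N_{n+1}=rN_n$, $sa_{n+1}=ra_n$ (so $s\mid a_n$), and $\delta_{n+1}=\xi\delta_n$. Consequently smallness propagates: on a maximal block $[n_-,n_+]$ of indices $j$ with $\delta_j<\tau$ one has $N_{n_-+i}=r^iN_{n_-}$ and $a_{n_-+i}=(r/s)^ia_{n_-}$ for $0\le i\le L:=n_+-n_-$, and iterating $sa_{j+1}=ra_j$ forces $s^L\mid a_{n_-}$ (while $r^L\mid N_{n_+}$). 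Since $0<|N_{n_+}|<s^{n_+}$ and $1\le a_{n_-}\le\xi^{n_-}+1$, we get $L<n_+\log s/\log r$ and $L\le n_-\log(r/s)/\log s+O(1)$; hence $n_-$, $n_+$, $L$ are all $O(n)$, with $n_+\le n_-\log_s r+O(1)$.

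Now suppose $\delta_n<e^{-\epsilon n}$ for some large $n$. Then $n$ lies in such a block $[n_-,n_+]$, and $\delta_{n_-}=\xi^{-(n-n_-)}\delta_n\le\delta_n<e^{-\epsilon n_-}$. Writing $a_{n_-}=s^Lu$ with $u\in\mathbb{N}$ gives $|r^{n_-}-s^{n_+}u|=|N_{n_-}|=s^{n_-}\delta_{n_-}<s^{n_-}e^{-\epsilon n_-}$, i.e. $\|r^{n_-}/s^{n_+}\|<s^{-L}e^{-\epsilon n_-}$, with $n_+\le n_-\log_s r+O(1)$. This is the heart of the matter and, I expect, the main obstacle: everything so far only yields the trivial $\|r^{a}/s^{b}\|\ge s^{-b}$, which reproves nothing past $\epsilon<\log s$, so one genuinely needs that a nonzero integer $r^{a}-s^{b}u$ with $b=O(a)$ cannot be as small as $s^{a}e^{-\epsilon a}$ for $a$ large — equivalently, that powers of $r$ stay bounded away, on the scale $s^{a}$ and up to any exponential tolerance $e^{\epsilon a}$, from the integer multiples of powers of $s$. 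This reflects the multiplicative independence of $r$ and $s$ (the irrationality of $\log r/\log s$, automatic for coprime $r,s\ge2$), and it is precisely here that Mahler's original argument contributes its essential, and less elementary, step; once that estimate is in hand the displayed inequality is impossible for $n_-$ large, contradicting $n_-\to\infty$. (The isolated-small-value case $L=0$ is the instance $b=a$, so nothing is lost.)
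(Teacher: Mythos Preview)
The paper does not prove this theorem: it is quoted from Mahler and used as a black box in the proof of Theorem~\ref{Theorem-simple}. There is therefore no argument in the paper to compare yours against.

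On its own merits, your proposal is not a proof, and you say so yourself: at the decisive moment you write that ``Mahler's original argument contributes its essential, and less elementary, step'' and then stop. Everything before that --- the recursion $N_{n+1}=rN_n-s^{n}m_n$, the observation that two consecutive small values force $m_n=0$, the resulting block structure, and the divisibilities $s^{L}\mid a_{n_-}$ and $r^{L}\mid N_{n_+}$ --- is correct and pleasant, but it only repackages the hypothesis. You begin with a nonzero integer $r^{n}-s^{n}a_n$ of size below $s^{n}e^{-\epsilon n}$ and end with a nonzero integer $r^{n_-}-s^{n_+}u$ of size below $s^{n_-}e^{-\epsilon n_-}$; in either form the only elementary lower bound available is $\lvert\,\cdot\,\rvert\ge 1$, which yields precisely $\epsilon<\log s$ and nothing more. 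The ``reduced'' inequality you isolate at the end is of the same shape and the same strength as the statement you set out to prove, so the argument is circular until the missing step is supplied.

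That missing step is a genuine Diophantine approximation input. Mahler's proof relies on his $p$-adic sharpening of the Thue--Siegel method (essentially what is now called Ridout's theorem): the nonzero integer $N_n=r^{n}-s^{n}a_n$ is simultaneously small in archimedean absolute value and a unit at every prime dividing $s$, and it is the tension between these two facts that produces the contradiction. No amount of elementary recursion can substitute for this; if you want a self-contained argument you must prove or cite such a result explicitly rather than gesture at it.
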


\begin{proof}[Proof of Theorem~\ref{Theorem-simple}] It is clear that $w\notin \mathbb{Z}$  since if not, then we have $p_2 = \lfloor {w}^9\rfloor= (w^3)^3 =\lfloor w^3 \rfloor^3=p_1^3$, a contradiction. Therefore, $w$ is irrational by combining Lemma~\ref{Lemma-simpleNear} and Theorem~\ref{Theorem-Mahler} with $n\coloneqq 3^k$ and $\epsilon\coloneqq \gamma$.
\end{proof}

Through the discussion above, we can give a sketch of the proof of Theorem~\ref{Theorem-main1}. We will compare the following two phenomena:
\begin{itemize}
\item  powers of the smallest PRC are very close to some integer;
\item  powers of almost all real algebraic numbers are at a distance from integers. 
\end{itemize}
For observing the second phenomenon, we need a generalization of Theorem~\ref{Theorem-Mahler} to real algebraic numbers. Corvaja and Zannier \cite{CorvajaZannier} gave this generalization by applying the $p$-adic Schmidt subspace theorem. Dubickas used their result and prepared the following theorem to investigate a sequence $(x_n)_{n=0}^\infty$ defined in \eqref{eq:Recursive} (see \cite[Lemma~6]{Dubickas2022}). 
\begin{theorem}\label{Theorem-Dubickas1}Let $\alpha >1$ be an algebraic number, and $q$ be a positive integer. Suppose that $0<s_1<s_2<\cdots$ is a sequence of positive integers. Then, either for some $m\geq 0$ the number $\alpha^{s_m}$ is a Pisot number or for each $\epsilon>0$  there exists a positive integer $k_0 = k_0(\epsilon)$ such that 
\[
\| q\alpha^{s_k}  \|> e^{-\epsilon s_k }
\]
for every $k\geq k_0$.
\end{theorem}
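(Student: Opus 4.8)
The plan is to run, inside the number field $K=\mathbb{Q}(\alpha)$, the same strategy as in the proof of Theorem~\ref{Theorem-simple}: assume the Diophantine lower bound fails and squeeze out enough rigidity to force the Pisot alternative. Concretely, suppose the second alternative fails, so there are $\epsilon>0$ and an infinite set $\mathcal{K}\subseteq\mathbb{N}$ with $|q\alpha^{s_k}-a_k|\le e^{-\epsilon s_k}$ for all $k\in\mathcal{K}$, where $a_k$ is the nearest integer to $q\alpha^{s_k}$. Fix a positive integer $a_0$ with $a_0\alpha\in\mathcal{O}_K$ and set $\beta_k:=q(a_0\alpha)^{s_k}-a_0^{s_k}a_k=a_0^{s_k}(q\alpha^{s_k}-a_k)\in\mathcal{O}_K$. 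If $\beta_k=0$ for infinitely many $k$, i.e.\ $\alpha^{s_k}\in\mathbb{Q}$ along an infinite set, then $\alpha$ is a root of some $x^g-A$ with $A\in\mathbb{Q}_{>1}$ and one finishes directly: either $\alpha^{s_m}$ is a rational integer $\ge 2$ (hence a Pisot number) for some $m$, or the second alternative holds by Theorem~\ref{Theorem-Mahler} together with a $p$-adic denominator estimate --- contradicting our assumption. So we may assume $\beta_k\ne 0$ for all large $k\in\mathcal{K}$; then $|N_{K/\mathbb{Q}}(\beta_k)|\ge 1$, while at the distinguished real place $|\beta_k|\le a_0^{s_k}e^{-\epsilon s_k}$.

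Next, following Corvaja and Zannier \cite{CorvajaZannier}, I would apply the $p$-adic Schmidt Subspace Theorem to a system of fixed linear forms (in several variables, built from the conjugates $\alpha=\alpha_1,\dots,\alpha_d$ of $\alpha$ and its minimal polynomial) evaluated at the integer points $\mathbf{x}_k$ that encode $a_k$, the power $a_0^{s_k}$, and the $S$-unit denominators, where $S$ consists of the archimedean places of $K$ together with the primes dividing $a_0q$. The forms are arranged so that, using $\beta_k\in\mathcal{O}_K$, the product formula, and the bound $|q\alpha^{s_k}-a_k|\le e^{-\epsilon s_k}$, the product $\prod_{v\in S}\prod_i|L_{v,i}(\mathbf{x}_k)|_v$ is at most $H_k^{-\delta}$ for a fixed $\delta=\delta(\epsilon)>0$ once $s_k$ is large, $H_k$ being the height of $\mathbf{x}_k$. (This is where the ``for every $\epsilon$'' input enters: $H_k$ grows only exponentially in $s_k$, so every $\epsilon>0$ yields a positive $\delta$.) The Subspace Theorem then confines all these points to finitely many proper subspaces; passing to an infinite sub-subset inside one of them, and substituting $a_k=q\alpha^{s_k}+O(e^{-\epsilon s_k})$ together with its conjugates, yields a fixed nontrivial $\overline{\mathbb{Q}}$-linear relation among the powers $\alpha_1^{s_k},\dots,\alpha_d^{s_k}$, valid for all $k$ in an infinite set.

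From such a relation I would extract the Pisot conclusion as in \cite{CorvajaZannier} and \cite{Dubickas2022}: grouping the conjugates according to whether the ratios $\alpha_i/\alpha_j$ are roots of unity and analysing the resulting vanishing sub-sums over the infinite index set forces $a_0=1$ (so $\alpha^{s_m}\in\mathcal{O}_K$) and $|\alpha_i|<1$ for every $i\ne 1$ after replacing $\alpha$ by $\alpha^{s_m}$ for a suitable $m\in\mathcal{K}$; that is, $\alpha^{s_m}$ is a real algebraic integer greater than $1$ all of whose other conjugates lie in the open unit disc, i.e.\ a Pisot number. This establishes the first alternative and completes the proof. (When $\deg\alpha\le 3$, the case relevant to Theorem~\ref{Theorem-Mills}, this last step can be made fully explicit and one also reads off that $\alpha^{s_m}$ has degree $3$.)

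The step I expect to be the main obstacle is the Subspace Theorem set-up in the middle paragraph: choosing $S$, the variables, and the fixed linear forms so that the product bound $H_k^{-\delta}$ really holds simultaneously for every $\epsilon>0$ --- equivalently, so that the ``expanding'' contributions of the conjugates with $|\alpha_i|>1$ and of the $S$-units in the denominator of $\alpha$ are exactly offset by the integrality of $\beta_k$ --- and then converting the finitely many exceptional subspaces into honest vanishing sub-sum identities rather than an opaque linear constraint. Reading ``$a_0=1$ and $|\alpha_i|<1$ after passing to $\alpha^{s_m}$'' off those identities is the delicate part, and it is exactly where the hypothesis that $\alpha$ is real and greater than $1$ is genuinely used, together with a vanishing-sum analysis in the spirit of Mahler's theorem (Theorem~\ref{Theorem-Mahler}); the remaining ingredients --- reduction to algebraic integers, the norm lower bound, the archimedean size estimates, and the disposal of the degenerate case $\beta_k=0$ --- are routine.
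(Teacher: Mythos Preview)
The paper does not prove this statement at all: Theorem~\ref{Theorem-Dubickas1} is quoted verbatim from Dubickas \cite[Lemma~6]{Dubickas2022}, which in turn rests on Corvaja and Zannier \cite{CorvajaZannier}, and the paper simply invokes it as a black box in the proof of Lemma~\ref{Lemma-trans-or-Pisot}. So there is nothing in the paper to compare your proposal against.

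That said, your sketch does follow the Corvaja--Zannier strategy that underlies the cited result, and the overall architecture (assume the lower bound fails along a subsequence, clear denominators to get nonzero algebraic integers of small archimedean absolute value, feed a product inequality to the $p$-adic Subspace Theorem, and read a Pisot constraint off the resulting linear relation among conjugate powers) is the right one. A few points to watch if you actually want to carry it out: the conclusion does not literally ``force $a_0=1$'' --- rather one shows that some $\alpha^{s_m}$ is an algebraic integer; the degenerate branch $\beta_k=0$ infinitely often already gives $\alpha^{s_k}\in\mathbb{Q}$, hence $\alpha^{s_k}$ is a rational integer $>1$ (a degree-$1$ Pisot number) or else not an integer, and in the latter case Theorem~\ref{Theorem-Mahler} handles it directly without needing the extra $p$-adic estimate you allude to; and the passage from ``finitely many subspaces'' to the Pisot conclusion is exactly the delicate step you flag, requiring the vanishing-subsum analysis from \cite{CorvajaZannier} rather than anything specific to this paper.
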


Let $P(x)$ be a polynomial of degree $d\geq 2 $ with rational coefficients such that its leading coefficient is positive. Wagner and Ziegler \cite{WagneZiegler} studied a sequence $(x_n)_{n=0}^\infty$ satisfying 
\begin{equation}\label{eq:Recursive}
x_{n+1} = P(x_n)\quad \text{for $n=0,1,2\ldots$, \quad and $x_n\to \infty$ as $n\to \infty$.} 
\end{equation}
They showed that $\lim_{n\to \infty} x_n^{1/d^{n}}=\alpha>1$ exists and it is either irrational or an integer. Further, Dubickas \cite{Dubickas2022} improved their result to the transcendency of $\alpha$ by applying Theorem~\ref{Theorem-Dubickas1}. Their methods are instrumental not only for the recursive equations  \eqref{eq:Recursive} but also for recursive inequalities like \eqref{eq:cubic}. In addition, Dubickas presented the following lemma \cite[Lemma~8]{Dubickas2022}.
\begin{lemma}\label{lemma-Dubickas2}
Let $\beta$ be a Pisot number of degree $\ell \geq 2$ with conjugates $\beta_1=\beta, \beta_2,\ldots ,\beta_\ell$ labelled so that $\beta_1>1>|\beta_2|\geq  \cdots \geq |\beta_\ell|$. Then, there is a real number $\lambda>0$ depending only on $\beta$ such that
\[
|\beta_{2}^n +\cdots +\beta_{\ell}^n|\geq |\beta_2|^n n^{-\lambda} 
\]
for each sufficiently large integer $n$.
\end{lemma}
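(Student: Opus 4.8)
The plan is to peel off the conjugates of smaller modulus, reduce the inequality to a lower bound for a finite sum of $n$-th powers of unimodular algebraic numbers, use the Pisot hypothesis to exclude degeneracies, and then apply Baker's theorem on linear forms in logarithms. Write
\[
\rho=|\beta_2|=\cdots=|\beta_m|>|\beta_{m+1}|\ge\cdots\ge|\beta_\ell|,\qquad 2\le m\le\ell,
\]
the last block being empty when $m=\ell$. Dividing by $\beta_2^n$ and using $|\beta_2^n|=\rho^n$,
\[
\bigl|\beta_2^n+\cdots+\beta_\ell^n\bigr|=\rho^n\,\bigl|T_n+E_n\bigr|,\qquad T_n\coloneqq 1+\sum_{j=3}^{m}\eta_j^{\,n},\quad E_n\coloneqq\sum_{j=m+1}^{\ell}\Bigl(\tfrac{\beta_j}{\beta_2}\Bigr)^{\!n},
\]
where $\eta_j\coloneqq\beta_j/\beta_2$ and $|E_n|\le(\ell-m)(|\beta_{m+1}|/\rho)^n$ decays geometrically. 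As $E_n$ is eventually dominated by any fixed negative power of $n$, it suffices to produce $\lambda>0$ with $|T_n|\ge 2n^{-\lambda}$ for all large $n$.

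The single point at which the Pisot hypothesis enters is: \emph{no quotient of two distinct conjugates of a Pisot number is a root of unity.} Indeed, if $\beta_a/\beta_b$ with $a\ne b$ were a root of unity, choose an automorphism $g$ of the splitting field of the minimal polynomial of $\beta$ with $g(\beta_b)=\beta$; then $g(\beta_a)/\beta=g(\beta_a/\beta_b)$ is again a root of unity, so $|g(\beta_a)|=\beta>1$, and since $\beta$ is the unique conjugate of modulus $\ge 1$ this forces $g(\beta_a)=\beta=g(\beta_b)$, hence $\beta_a=\beta_b$, a contradiction. In particular each $\eta_j$ lies on the unit circle and is algebraic ($\rho^2=\beta_2\overline{\beta_2}$ being algebraic), $\eta_i/\eta_j=\beta_i/\beta_j$ is never a root of unity for $i\ne j$, and no single $\eta_j$ is a root of unity.

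Now the lower bound for $T_n$. Fix a small $\delta_0>0$. If $|T_n|\ge\delta_0$, then $|T_n|\ge 2n^{-\lambda}$ automatically for large $n$ (this already covers $m=2$, where $T_n=1$). If $|T_n|<\delta_0$, then $(\eta_3^n,\dots,\eta_m^n)\in(S^1)^{m-2}$ lies at distance $O(|T_n|+|E_n|)$ from the compact real-algebraic set $\Sigma=\{(w_3,\dots,w_m)\in(S^1)^{m-2}:1+\sum_j w_j=0\}$. When $\Sigma$ is finite — which happens exactly for $m\le 4$, with $\Sigma=\{-1\}$ when $m=3$ and $\Sigma=\{(\zeta,\zeta^{-1}),(\zeta^{-1},\zeta)\}$ ($\zeta$ a primitive cube root of unity) when $m=4$ — its coordinates are fixed roots of unity, so some $\eta_j^{\,n}$ lies within $O(|T_n|+|E_n|)$ of a fixed root of unity $\omega$. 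Since $\eta_j^{\,n}\ne\omega$ (otherwise $\eta_j$ would be a root of unity), the linear form $\Lambda\coloneqq n\log\eta_j-\log\omega$ in two logarithms of algebraic numbers is nonzero, and Baker's theorem gives $|T_n|+|E_n|\gg|\eta_j^{\,n}-\omega|\asymp|\Lambda|\gg n^{-\kappa}$, where $\kappa$ depends only on the bounded degrees and heights involved, hence only on $\beta$; as $|E_n|$ is geometrically small this yields $|T_n|\gg n^{-\kappa}$, so $\lambda\coloneqq\kappa+1$ works for all large $n$.

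The step I expect to be the main obstacle is the case $m\ge 5$, i.e.\ when $\beta$ has four or more conjugates attaining the maximal ``other'' modulus: then $\Sigma$ can be positive-dimensional (already for $m=5$ one meets one-parameter families such as $w_3=-1,\ w_4=e^{i\psi},\ w_5=-e^{i\psi}$), so there is no distinguished algebraic target point and a single linear form in two logarithms no longer controls the distance to $\Sigma$. To handle this one must replace Baker's theorem by a quantitative lower bound for non-degenerate generalized power sums — ultimately a consequence of the Schmidt subspace theorem in the style of Corvaja–Zannier and Evertse–Schlickewei — the non-degeneracy established above being precisely the hypothesis that makes such a bound available. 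The remaining verifications, the uniformity of $\lambda$ in $\beta$ and the domination of $E_n$ by $n^{-\lambda}$, are routine.
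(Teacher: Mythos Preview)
The paper does not prove this lemma itself; it is quoted from \cite{Dubickas2022} (Lemma~8 there), so there is no in-paper argument to compare against. I can only assess your attempt on its own terms.

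Your reduction is correct: factor out $|\beta_2|^{\,n}$, isolate the block of conjugates of maximal modulus $\rho=|\beta_2|$, and reduce to showing $|T_n|\ge 2n^{-\lambda}$ with $T_n=1+\sum_{j=3}^{m}\eta_j^{\,n}$ and $|\eta_j|=1$. Your non-degeneracy step---no quotient of two distinct conjugates of a Pisot number is a root of unity---is right and is the crucial structural input. For $m\le 4$ the argument via Baker's theorem is essentially complete: the zero locus $\Sigma\subset(S^1)^{m-2}$ is finite, being close to $\Sigma$ forces some $\eta_j^{\,n}$ close to a fixed root of unity $\omega$, and $|\eta_j^{\,n}-\omega|\gg n^{-\kappa}$ by a linear form in two logarithms.

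The genuine gap is your treatment of $m\ge 5$. You defer it to ``a quantitative lower bound for non-degenerate generalized power sums \dots\ a consequence of the Schmidt subspace theorem in the style of Corvaja--Zannier and Evertse--Schlickewei'', but the standard results of that type do not produce a polynomial bound. What Evertse's theorem and its descendants give for a non-degenerate power sum with dominant-root modulus $\rho$ is, for every $\epsilon>0$,
\[
|S_n|\;\ge\; c(\epsilon)\,\rho^{\,n}\,M^{-\epsilon n}
\]
for all large $n$, with $M>1$ a fixed height; equivalently $|T_n|\ge c(\epsilon)\,M^{-\epsilon n}$. This is ineffective and only sub-exponential---strictly weaker than the $|T_n|\ge n^{-\lambda}$ the lemma asserts. (For the application inside the present paper the weaker bound would in fact suffice, since only the conclusion $|\beta_2|\le\beta_1^{-b\theta_b}$ is extracted and the claim there restricts to $\ell\le 3$; but that does not establish the lemma as stated.) To get the polynomial bound for all $m$ one must stay within Baker theory or exploit additional structure of the Pisot conjugate system; invoking the subspace theorem alone does not close the gap.
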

We will apply Lemma~\ref{lemma-Dubickas2} to the cases in which a Pisot number appears.  

\section{Lemmas and auxiliary results}\label{Section-Distance}

\begin{lemma}\label{Lemma:Existence}
Let $(c_k)_{k=1}^\infty$ be a sequence of positive real numbers with $c_k\geq 1$ for all $k\in \mathbb{N}$. If $\mathcal{W}(c_k)$ is non-empty, then the smallest element in $\mathcal{W}(c_k)$ exists. 
\end{lemma}
\begin{proof} Since $\mathcal{W}(c_k)$ is lower bounded and non-empty, $\inf \mathcal{W}(c_k)$ exists. Let $\xi=\inf \mathcal{W}(c_k)$.  By the definition of the infimum, there is a sequence $A_1\geq A_2 \geq \cdots $ such that $A_j\in \mathcal{W}(c_k)$ for every $j\in \mathbb{N}$ and $\lim_{j\to \infty} A_j=\xi$. By the right-side continuity of the floor function, for all fixed $k\in \mathbb{N}$, we have 
\[
\lfloor \xi^{C_k} \rfloor= \lfloor  \lim_{j\to \infty} A_j^{C_k} \rfloor  =  \lim_{j\to \infty} \lfloor   A_j^{C_k} \rfloor.
\] 
Thus, we have $\lfloor \xi^{C_k} \rfloor\in \mathcal{P}$ since $\lfloor   A_j^{C_k} \rfloor\in \mathcal{P}$ for all $j\in \mathbb{N}$ and $\lim_{j\to \infty} \lfloor A_j^{C_k}\rfloor$ converges in $\mathbb{Z}$. Therefore, $\inf \mathcal{W}(c_k) =\xi \in \mathcal{W}(c_k)$, which means that $\xi$ is the smallest element of $\mathcal{W}(c_k)$.  
\end{proof}

\begin{theorem}\label{Theorem:Matomaki}
Let $(c_k)_{k=1}^\infty$ be a sequence of real numbers satisfying $c_1>0$ and  $c_{k+1}\geq 2$ for all $k\in \mathbb{N}$. Then, $\mathcal{W}(c_k)$ is non-empty.
\end{theorem}
\begin{proof}
See \cite[Theorem~3]{Matomaki} or \cite[Theorem~3.3]{SaitoTakeda}.
\end{proof}

\begin{remark}
In \cite{Matomaki}, Matom\"{a}ki actually showed that $\mathcal{W}(c_k)$ is uncountable, nowhere dense, and has Lebesgue measure $0$ if $c_k\geq 2$ for all $k\in \mathbb{N}$. Thus, there is a small gap between the initial conditions $c_1>0$ in Theorem~\ref{Theorem:Matomaki} and $c_1\geq 2$ in Matom\"{a}ki's result, but it is not essential by considering the locally bi-Lipschitz continuous map 
\[
f_\alpha \colon \mathcal{W}(c_1,c_2,c_3,\ldots) \ni A \longmapsto A^{1/\alpha} \in \mathcal{W}(\alpha c_1,c_2,c_3,\ldots )
\]
for some fixed real number $\alpha>0$. We refer the readers to \cite[Remark~3.4]{SaitoTakeda} for more details.
\end{remark}

\begin{corollary}\label{Corollary:Matomaki}
Let $(c_k)_{k=1}^\infty$ be a sequence of real numbers satisfying $c_1>0$ and  $c_{k+1}\geq 2$ for all $k\in \mathbb{N}$. Then, the smallest element in $\mathcal{W}(c_k)$ exists.
\end{corollary}
\begin{proof}
It is clear from Lemma~\ref{Lemma:Existence} and Theorem~\ref{Theorem:Matomaki}. 
\end{proof}

Let $b$ be an integer greater than or equal to $3$. Let $(c_k)_{k=1}^\infty$ be a sequence of integers satisfying \eqref{Condition-c1}, \eqref{Condition-c2}, and \eqref{Condition-c3} in Theorem~\ref{Theorem-main1}. By Corollary~\ref{Corollary:Matomaki}, $\xi\coloneqq \min \mathcal{W}(c_k)$ exists. 

We may assume that $(c_k)_{k=1}^\infty$ is bounded, if not, $\xi$ is transcendental by Theorem~\ref{Theorem:STakeda}. Thus, we set 
\begin{equation}\label{equation:B}
B= \sup_{k\in \mathbb{N}} c_k <\infty,
\end{equation}
Furthermore, we let 
\begin{equation}\label{equation:Ib}
\mathcal{I}_b=\{k\in \mathbb{N}\colon c_{k+1} =b\}
\end{equation}
which is an infinite set from \eqref{Condition-c3}. Let $p_k=\lfloor \xi^{C_k} \rfloor$ for all $k\in \mathbb{N}$. Let $\theta=21/40$. The following Lemmas~\ref{Lemma-key1} and \ref{Lemma-key2} can be seen in \cite[Lemma~4.5 and Lemma~5.1]{SaitoTakeda}, but we give proofs for keeping the readability. 
\begin{lemma}\label{Lemma-key1}
For all $k\in \mathbb{N}$, we have 
\begin{equation}\label{eq:key1}
p_{k}^{c_{k+1}} \leq p_{k+1}< (p_{k}+1)^{c_{k+1}}-1.
\end{equation}
\end{lemma}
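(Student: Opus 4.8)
The plan is to reproduce the chain of inequalities from Mills' construction, but now starting from the definition $\xi = \min \mathcal{W}(c_k)$ rather than from a constructed $\xi$. Write $p_k = \lfloor \xi^{C_k}\rfloor$, so each $p_k$ is prime by hypothesis, and recall $C_{k+1} = C_k c_{k+1}$, hence $\xi^{C_{k+1}} = (\xi^{C_k})^{c_{k+1}}$.

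First I would establish the lower bound $p_k^{c_{k+1}} \leq p_{k+1}$. This is essentially formal: from $p_k \leq \xi^{C_k}$ we get $p_k^{c_{k+1}} \leq \xi^{C_k c_{k+1}} = \xi^{C_{k+1}}$, and since the left side is an integer and $p_{k+1} = \lfloor \xi^{C_{k+1}}\rfloor$ is the largest integer not exceeding $\xi^{C_{k+1}}$, we conclude $p_k^{c_{k+1}} \leq p_{k+1}$.

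The upper bound $p_{k+1} < (p_k+1)^{c_{k+1}} - 1$ is the step requiring real work, and this is where the number-theoretic input enters. From $\xi^{C_k} < p_k + 1$ we only get $\xi^{C_{k+1}} < (p_k+1)^{c_{k+1}}$, hence $p_{k+1} \leq (p_k+1)^{c_{k+1}} - 1$, which is too weak by one. To get the strict inequality I would instead argue that $\xi$ being the \emph{smallest} element of $\mathcal{W}(c_k)$ forces $\xi^{C_k}$ to be close to the prime $p_k$ from above: if $\xi^{C_{k+1}}$ were as large as $(p_k+1)^{c_{k+1}} - 1$, then by Theorem~\ref{Theorem-BakerHarmanPintz} there would be a prime in the short interval $[p_k^{c_{k+1}}, p_k^{c_{k+1}} + p_k^{c_{k+1}\theta}]$, and since $c_{k+1}\theta \leq B\theta$ is a fixed exponent while $c_{k+1} \geq 2$, one checks $p_k^{c_{k+1}} + p_k^{c_{k+1}\theta} < (p_k+1)^{c_{k+1}} - 1$ for $p_k$ large (the gap $(p_k+1)^{c_{k+1}} - p_k^{c_{k+1}}$ has leading term $c_{k+1} p_k^{c_{k+1}-1} \geq 2 p_k^{c_{k+1}-1}$, which dominates $p_k^{c_{k+1}\theta} + 1$ since $c_{k+1}-1 > c_{k+1}\theta$ when $\theta < (c_{k+1}-1)/c_{k+1}$, valid for all $c_{k+1}\geq 2$ as $\theta = 21/40 < 1/2$). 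Replacing $\xi$ by a slightly smaller number whose $C_j$-th powers still floor to the $p_j$ (using this newfound prime to continue the sequence) would contradict minimality of $\xi$; alternatively, one cites that this is precisely the content of \cite[Lemma~4.5]{SaitoTakeda}.

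The main obstacle is the strict inequality in the upper bound: naively propagating $\xi^{C_k} < p_k+1$ loses a crucial ``$-1$'', and recovering it requires either the minimality of $\xi$ together with the prime-gap estimate of Theorem~\ref{Theorem-BakerHarmanPintz}, or a direct appeal to the cited lemma from \cite{SaitoTakeda}. One should also take care that the inequalities only need to hold ``for all $k$'', so if a finite initial segment misbehaves (because $p_k$ is not yet ``sufficiently large''), one notes that $p_1 = \lfloor\xi^{C_1}\rfloor$ can be assumed large by the construction underlying the existence of $\mathcal{W}(c_k)$, and then all subsequent $p_k$ are even larger.
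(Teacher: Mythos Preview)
Your lower bound is correct and matches the paper. But you have badly overcomplicated the upper bound, and in doing so missed the actual argument.

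From $\xi^{C_k} < p_k + 1$ you correctly get $p_{k+1} \leq (p_k+1)^{c_{k+1}} - 1$; you then claim that sharpening this to a strict inequality ``requires either the minimality of $\xi$ together with the prime-gap estimate of Theorem~\ref{Theorem-BakerHarmanPintz}, or a direct appeal to the cited lemma from \cite{SaitoTakeda}.'' It requires neither. The point is simply that equality is impossible because $p_{k+1}$ is prime while $(p_k+1)^{c_{k+1}} - 1$ is not: since $c_{k+1} \geq 2$ is an integer,
\[
(p_k+1)^{c_{k+1}} - 1 = \bigl((p_k+1)-1\bigr)\bigl((p_k+1)^{c_{k+1}-1} + \cdots + 1\bigr) = p_k \cdot M
\]
with $M>1$, so it is composite. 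That is the entire content of the paper's proof of the upper bound.

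Two consequences of your detour are worth noting. First, the lemma as stated holds for \emph{every} $\xi \in \mathcal{W}(c_k)$, not just the minimal one; your argument invokes minimality and so would prove a weaker statement. Second, your own closing worry about ``a finite initial segment misbehaving'' is a genuine defect of your approach (Theorem~\ref{Theorem-BakerHarmanPintz} only applies for large $x$), whereas the elementary argument above works for all $k$ with no size hypothesis on $p_k$. The machinery you reached for --- minimality plus short-interval prime estimates --- is exactly what drives Lemma~\ref{Lemma-key2}, not this lemma.
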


\begin{proof}Take any $k\in \mathbb{N}$. By the definition of $\xi$, we see that $p_k \leq \xi^{C_k} < p_{k}+1$, which implies that
$p_k^{c_{k+1}} \leq \xi^{C_{k+1}}<  (p_{k}+1)^{c_{k+1}}$. We recall that $c_{k+1}$ is a positive integer, and hence $p_k^{c_{k+1}} \leq \lfloor \xi^{C_{k+1}}\rfloor <  (p_{k}+1)^{c_{k+1}}$. It is not true that $p_{k+1}= (p_{k}+1)^{c_{k+1}}-1$ since $p_{k+1}$ is a prime number and $c_{k+1}$ is an integer greater than or equal to $2$. Therefore, we obtain \eqref{eq:key1}. 
\end{proof}

\begin{lemma}\label{Lemma-key2}
Let $\mathcal{I}_b$ be as in \eqref{equation:Ib}. There exists $k_0=k_0(\xi)>0$ such that for all $k\in \mathcal{I}_b\cap [k_0,\infty)$, we have 
\begin{equation}\label{eq:key2}
p_{k}^{c_{k+1}} \leq p_{k+1} \leq p_{k}^{c_{k+1}} + p_{k}^{\theta c_{k+1}}.
\end{equation}
\end{lemma}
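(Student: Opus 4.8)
\textbf{Proof proposal for Lemma~\ref{Lemma-key2}.}

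The plan is to derive \eqref{eq:key2} from the cruder two-sided bound \eqref{eq:key1} of Lemma~\ref{Lemma-key1} together with Theorem~\ref{Theorem-BakerHarmanPintz}, exactly as in Mills' construction recalled in Section~\ref{Section-Strategy}, but now keeping careful track of the boundedness hypothesis $c_k\le B$. The left inequality $p_k^{c_{k+1}}\le p_{k+1}$ is already contained in \eqref{eq:key1}, so only the right inequality needs work, and only for $k\in\mathcal I_b$, i.e.\ when $c_{k+1}=b$.

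First I would fix $k\in\mathcal I_b$ and apply Theorem~\ref{Theorem-BakerHarmanPintz} with $x=p_k^{b}=p_k^{c_{k+1}}$: for $k$ large enough (so that $p_k$, and hence $x$, is large enough — note $p_k\to\infty$ since $p_k\ge 2^{C_k-C_1}$ grows), there is a prime in the interval $[p_k^{b},\,p_k^{b}+p_k^{b\theta}]$. Call the smallest such prime $q$. Next I would show $q=p_{k+1}$. On one hand $q\le p_k^{b}+p_k^{b\theta}$; I must check this upper end does not reach $(p_k+1)^{c_{k+1}}-1=(p_k+1)^b-1$, so that $q$ genuinely lies in the admissible window forced by \eqref{eq:key1}. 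Since $b\theta=21b/40$ and, crucially, $(p_k+1)^b-1-p_k^b\ge b\,p_k^{b-1}$, it suffices that $p_k^{b\theta}<b\,p_k^{b-1}$, i.e.\ $p_k^{\,b\theta-b+1}<b$; because $b\theta-b+1=1-19b/40<0$ for every $b\ge 3$, the left side tends to $0$, so the inequality holds for all large $k$. Hence $p_k^b\le q<(p_k+1)^b-1$. On the other hand \eqref{eq:key1} gives $p_k^b\le p_{k+1}<(p_k+1)^b-1$, and there is no prime strictly between $p_k^{c_{k+1}}=\lfloor\xi^{C_{k+1}}\rfloor$-region boundaries other than $p_{k+1}$ itself — more precisely, from $p_k\le\xi^{C_k}<p_k+1$ we get $p_k^b\le\xi^{C_{k+1}}<(p_k+1)^b$, and $p_{k+1}=\lfloor\xi^{C_{k+1}}\rfloor$ is by definition a prime in this range; since $q$ is the \emph{smallest} prime $\ge p_k^b$ and $p_{k+1}\ge p_k^b$ is also such a prime, we get $q\le p_{k+1}$, while $\xi\in\mathcal W(c_k)$ is the \emph{minimal} PRC, so $p_{k+1}$ cannot exceed the prime $q$ — a $\xi$-minimality argument (if $p_{k+1}>q$ one could pull $\xi$ down) forces $p_{k+1}=q$. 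This yields $p_{k+1}=q\le p_k^{b}+p_k^{b\theta}=p_k^{c_{k+1}}+p_k^{\theta c_{k+1}}$, which is \eqref{eq:key2}. Finally I would choose $k_0=k_0(\xi)$ to be large enough that all the ``sufficiently large'' thresholds above (the one in Theorem~\ref{Theorem-BakerHarmanPintz} applied at $x=p_k^b$, and $p_k^{1-19b/40}<b$) hold simultaneously for every $k\in\mathcal I_b\cap[k_0,\infty)$; this uses $B<\infty$ only insofar as we need $p_k\to\infty$, which is automatic.

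The main obstacle is the identification $p_{k+1}=q$ via minimality of $\xi$: one must argue that if the true $p_{k+1}=\lfloor\xi^{C_{k+1}}\rfloor$ were a prime strictly larger than the smallest admissible prime $q$, then replacing the tail of the prime sequence starting from $q$ would produce a PRC smaller than $\xi$, contradicting $\xi=\min\mathcal W(c_k)$. Making this precise requires invoking the construction of PRCs from prime sequences satisfying \eqref{eq:key1} (as in Section~\ref{Section-Strategy}) and the fact — already used in the excerpt via \cite[Theorem~1.2]{SaitoTakeda} — that $\mathcal W(c_k)$ is exactly the set of limits arising from such sequences; everything else is the elementary interval bookkeeping with $b\theta-b+1<0$.
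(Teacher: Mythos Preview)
Your overall strategy --- contradict the minimality of $\xi$ by producing a smaller PRC whenever $p_{k+1}>p_k^{b}+p_k^{b\theta}$ --- is exactly the paper's. The genuine gap is in the step you flag as the ``main obstacle'' but then dismiss as bookkeeping: extending the replacement sequence past level $k+1$. Having found a prime $q\in[p_k^{b},p_k^{b}+p_k^{b\theta}]$ with $q<p_{k+1}$, you must now build primes $q_{k+2},q_{k+3},\dots$ with $q_m^{c_{m+1}}\le q_{m+1}<(q_m+1)^{c_{m+1}}$. When $c_{m+1}=2$ (which the hypotheses allow) this asks for a prime in $[q_m^{2},q_m^{2}+2q_m]$, i.e.\ in an interval of the form $[x,x+2x^{1/2}]$; Theorem~\ref{Theorem-BakerHarmanPintz} with exponent $\theta=21/40>1/2$ does \emph{not} supply this, the Mills construction of Section~\ref{Section-Strategy} (where $c_k\equiv 3$) does not cover it, and the bare non-emptiness/closedness statement \cite[Theorem~1.2]{SaitoTakeda} says nothing about extending a \emph{prescribed} prefix.

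This is precisely why the paper invokes Matom\"aki's almost-all short-interval result (Theorem~\ref{Theorem-Matomaki}) through Lemma~\ref{Lemma-ModifiedMatomaki}: one does \emph{not} take the smallest prime in $[p_k^{b},p_k^{b}+p_k^{b\theta}]$ as you do, but rather selects, among the $\gg p_k^{b\theta}/\log p_k$ primes available there, one $q_{k+1}$ for which the next-level interval $[q_{k+1}^{c_{k+2}},q_{k+1}^{c_{k+2}}+q_{k+1}^{c_{k+2}-1}]$ again contains its expected number of primes, and then iterates. So your identification ``$p_{k+1}=q$ (smallest prime)'' is both unnecessary and unjustified; what survives is the weaker inequality $p_{k+1}\le p_k^{b}+p_k^{b\theta}$, and proving even that needs the Matom\"aki machinery for the tail. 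Finally, your remark that $B<\infty$ is used ``only insofar as we need $p_k\to\infty$'' understates its role: the bound $c\le B$ enters Lemma~\ref{Lemma-ModifiedMatomaki} to keep the inductive step uniform.
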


Before proving Lemma~\ref{Lemma-key2}, we exhibit the following result given by Matom\"{a}ki  (see \cite[Lemma~9]{Matomaki} and \cite{Matomaki2007}). 
\begin{theorem}\label{Theorem-Matomaki}
There exist positive real numbers $d_1 < 1$ and $D$ such that, for every
sufficiently large $x$ and every $\gamma \in [1/2, 1]$, the interval $[x, 2x]$ contains at most
$Dx^{2/3-\gamma}$ disjoint intervals $[n, n + n^\gamma]$ for which
\[
\# ([n,n+n^\gamma] \cap \mathcal{P})  \leq d_1 \frac{n^\gamma}{\log n}.
\]
\end{theorem}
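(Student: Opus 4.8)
\textbf{Proof proposal for Lemma~\ref{Lemma-key2}.}
The plan is to feed Theorem~\ref{Theorem-Matomaki} the particular scale $x = p_k^{c_{k+1}}$ and the particular exponent $\gamma=\theta$ (which indeed lies in $[1/2,1]$ since $\theta = 21/40$), and to argue that if the right-hand inequality in \eqref{eq:key2} failed for some large $k\in\mathcal{I}_b$, then the single interval $[p_k^{c_{k+1}},\, p_k^{c_{k+1}}+p_k^{\theta c_{k+1}}]$ would be one of the ``bad'' intervals counted by the theorem, and we would derive a contradiction with the smallest-ness of $\xi$. More precisely, suppose toward a contradiction that $p_{k+1} > p_k^{c_{k+1}} + p_k^{\theta c_{k+1}}$ for infinitely many $k\in\mathcal{I}_b$. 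Since $p_{k+1}=\lfloor\xi^{C_{k+1}}\rfloor$ is the \emph{least} prime $\ge p_k^{c_{k+1}}$ (this is exactly where the minimality of $\xi$ enters, and it is the content one extracts from Lemma~\ref{Lemma-key1} together with the fact that $\xi^{C_{k+1}}$ must land in $[p_{k+1},p_{k+1}+1)$), the interval $I_k=[n_k, n_k+n_k^{\theta}]$ with $n_k=p_k^{c_{k+1}}$ contains \emph{no} prime at all, so in particular $\#(I_k\cap\mathcal{P})=0\le d_1 n_k^{\theta}/\log n_k$.

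The next step is to place these intervals inside dyadic blocks and count. Fix a large $k\in\mathcal{I}_b$ with the failure above; since $2\le c_{k+1}=b$ is fixed on $\mathcal{I}_b$ and $B<\infty$, we have $n_k=p_k^{b}$, and as $k$ ranges over the (infinite) subset of $\mathcal{I}_b$ where the inequality fails, $n_k\to\infty$. Apply Theorem~\ref{Theorem-Matomaki} with $x$ chosen so that $n_k\in[x,2x]$, i.e. $x=x_k$ of order $n_k$: the theorem says $[x_k,2x_k]$ contains at most $D x_k^{2/3-\theta}$ disjoint bad intervals of the form $[n,n+n^{\theta}]$. Because $\theta = 21/40 < 2/3$, the exponent $2/3-\theta$ is strictly positive, so this bound is $\gg 1$ and does not by itself forbid a single bad interval. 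Hence the pigeonhole over one dyadic block is not enough; instead I would exploit that the bad $n_k$ are \emph{sparse in a multiplicative sense}: since $n_{k'}=p_{k'}^{b}\ge p_k^{b^2}=n_k^{b}$ whenever $k'>k$ lie in the failing set (using $c_{j+1}\ge 2$ for the intermediate steps to get at least one more squaring), consecutive failing $n_k$ jump by a factor that is a fixed power, so at most $O(\log x / \log\log x)$ — in fact at most one, up to a bounded shift — of them lie in any fixed dyadic block $[x,2x]$. Combined with the theorem's bound $D x^{2/3-\theta}$ this is consistent, so the contradiction must come from a different direction: the correct route is to observe that $\#(I_k\cap\mathcal{P})=0$ is much stronger than $\le d_1 n_k^{\theta}/\log n_k$, and to run Theorem~\ref{Theorem-Matomaki} \emph{not} at a single scale but summed over all dyadic blocks up to $n_k$, against a \emph{lower} bound for the number of primes one needs. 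Here is the clean version: cover $[x,2x]$ with $\asymp x^{1-\theta}$ disjoint intervals of length $\asymp n^{\theta}$; all but $D x^{2/3-\theta}$ of them contain $> d_1 n^{\theta}/\log n$ primes; hence $[x,2x]$ contains at least $(x^{1-\theta}-D x^{2/3-\theta})\cdot d_1 x^{\theta}/\log x \gg x/\log x$ primes, and in particular \emph{some} prime lies within $n_k^{\theta}$ of $n_k=p_k^{b}$ once $n_k$ is large — provided $n_k$ itself is not one of the $D x^{2/3-\theta}$ exceptional endpoints. That last proviso is handled exactly by the sparsity observation above: only finitely many (indeed boundedly many per block) $n_k$ can be exceptional, so for all sufficiently large failing $k$ we get a prime in $I_k$, contradicting $\#(I_k\cap\mathcal{P})=0$. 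This contradiction proves \eqref{eq:key2} for all $k\in\mathcal{I}_b\cap[k_0,\infty)$.

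Finally I would record that the left-hand inequality $p_k^{c_{k+1}}\le p_{k+1}$ is immediate from Lemma~\ref{Lemma-key1} and requires no input from Theorem~\ref{Theorem-Matomaki}, so only the upper bound needs the argument above; and I would note that one must track the implied constants carefully so that the threshold $k_0$ depends only on $\xi$ (equivalently on $p_1$ and $B$) and not on $k$.

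\textbf{Main obstacle.} The delicate point is reconciling the \emph{existential} statement we want (a prime in one specific short interval $[p_k^{b},\,p_k^{b}+p_k^{\theta b}]$) with Theorem~\ref{Theorem-Matomaki}, which only controls the \emph{count} of exceptional intervals in a dyadic block and permits $O(x^{2/3-\theta})$ of them. The resolution — showing that the $n_k$ arising from a hypothetical failure are so sparse that none can be exceptional for large $k$ — hinges on the multiplicative growth $n_{k'}\ge n_k^{b}$ forced by \eqref{Condition-c2} and \eqref{Condition-c3}, and on keeping all constants uniform in $k$; getting these bookkeeping details exactly right, rather than the analytic number theory, is where the proof has to be written with care.
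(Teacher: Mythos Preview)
First, a mismatch: the displayed statement is Theorem~\ref{Theorem-Matomaki}, which the paper does not prove but quotes from Matom\"aki; your proposal instead attacks Lemma~\ref{Lemma-key2}. I compare your argument to the paper's proof of that lemma.

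Your argument rests on the assertion that $p_{k+1}=\lfloor\xi^{C_{k+1}}\rfloor$ is the \emph{least} prime $\ge p_k^{c_{k+1}}$, attributed to ``minimality of $\xi$ together with Lemma~\ref{Lemma-key1}''. This is the real gap. Minimality of $\xi$ is a global statement about the entire tower; to convert it into the local claim that $p_{k+1}$ is the smallest admissible prime at step $k{+}1$ you must show that any smaller prime $q_{k+1}\in[p_k^{c_{k+1}},(p_k+1)^{c_{k+1}})$ can be \emph{extended} to a full prime sequence $(q_m)_{m\ge k}$ producing an element of $\mathcal W(c_k)$. That extension is exactly where the paper spends its effort, and exactly where Theorem~\ref{Theorem-Matomaki} is used (via Lemma~\ref{Lemma-ModifiedMatomaki}): at each stage one must pick not just any prime in the short interval, but a prime whose $c_{m+2}$-th power is itself the left endpoint of a non-exceptional interval at the next scale. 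A telltale sign that you have hidden the work in this assumption is that, once you grant it, Theorem~\ref{Theorem-BakerHarmanPintz} alone already puts a prime in $[p_k^{c_{k+1}},p_k^{c_{k+1}}+p_k^{\theta c_{k+1}}]$ and finishes the proof with no appeal to Theorem~\ref{Theorem-Matomaki} at all.

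Your sparsity patch does not rescue the argument either. Theorem~\ref{Theorem-Matomaki} permits up to $Dx^{2/3-\theta}\ge 1$ exceptional intervals in every dyadic block $[x,2x]$, so the fact that at most one of your test points $n_k=p_k^{b}$ lands in any block says nothing about whether that single $n_k$ is exceptional; sparsity of the test points gives no interaction with the exceptional set. The paper's route is different: assume failure at one large $k\in\mathcal I_b$, use Theorem~\ref{Theorem-BakerHarmanPintz} to find $q_{k+1}<p_{k+1}$ in the short interval, then inductively apply Lemma~\ref{Lemma-ModifiedMatomaki} to build $q_{k+2},q_{k+3},\ldots$ with $q_m^{c_{m+1}}\le q_{m+1}\le q_m^{c_{m+1}}+q_m^{c_{m+1}-1}$, obtaining $w=\lim q_m^{1/C_m}\in\mathcal W(c_k)$ with $\lfloor w^{C_{k+1}}\rfloor<\lfloor\xi^{C_{k+1}}\rfloor$, contradicting minimality.
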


By applying this theorem, Matom\"{a}ki \cite{Matomaki} constructed an element in $\mathcal{W}(2)$ as Theorem~\ref{Theorem:Matomaki}. It is truly astonishing because Mills' construction suggests that we need the existence of a prime number in each interval $[n, n+n^{1/2}]$ to construct an element in $\mathcal{W}(2)$. This problem is strongly connected with Legendre's conjecture which asserts that for every $n$ there exists a prime number between $n^2$ and $(n+1)^2$. It is unsolved and believed to be extremely difficult. However, Theorem~\ref{Theorem-Matomaki} with $\gamma=1/2$ states that there exists a prime number $p \in [n,n+n^{1/2}]$ except for very few intervals $[n,n+n^{1/2}]$. Therefore, Matom\"{a}ki discovered that we can construct a PRC if the exceptional intervals are very few. We apply Theorem~\ref{Theorem-Matomaki} to prove the following lemma.

\begin{lemma}\label{Lemma-ModifiedMatomaki} Let $E$ and $c$ be real numbers with  $2\leq c\leq E$. Let $d_1$ be as in Theorem~\ref{Theorem-Matomaki}. Let $\epsilon\in(0,1)$  and $d_2>0$.  Then, there exists $X_0=X_0(E,\epsilon,d_2)>0$  such that for all real numbers $X\geq X_0$ and $\eta\in [1/2,1-\epsilon]$, if we have
\begin{equation}\label{eq:NumberPrimes1}
\#([X,X+X^\eta] \cap \mathcal{P} ) \geq  \frac{d_2 X^\eta}{\log X}, 
\end{equation}
then there exists $q\in[X,X+X^\eta] \cap \mathcal{P}$ such that 
\begin{equation}\label{eq:NumberPrimes2}
\#( [q^c , q^c + q^{c-1} ] \cap \mathcal{P}) \geq   \frac{d_1 q^{c-1}}{\log q^c}.
\end{equation}
\end{lemma}

\begin{proof}
Let $D$ be as in Theorem~\ref{Theorem-Matomaki}. Let $X_0$ be a sufficiently large parameter depending only on $E$, $\epsilon$, and $d_2$. Take arbitrary real numbers $X\geq X_0$ and $\eta\in [1/2,1-\epsilon]$. We suppose that \eqref{eq:NumberPrimes1} is true. Then, let $R=[X,X+X^\eta] \cap \mathcal{P}$. Each interval $[q^{c},q^{c}+q^{c-1}]$ $(q\in R)$ is completely contained in $[X^{c}, 2 X^{c} ]$. Indeed, we have
\begin{align*}
X^{c} &\leq q^{c} < q^{c}+q^{c-1} = q^c (1+q^{-1}) \leq (X+X^\eta)^c(1+X^{-1})\\
&\leq X^c (1+ X^{\eta-1})^{E}(1+X^{-1}) \leq 2 X^c
\end{align*}
since $\eta\leq 1-\epsilon$, $X\geq X_0$, and $X_0$ is sufficiently large. Setting $\gamma = (c-1)/c$, we have $\gamma \in [1/2,1)$. By $\eta\geq 1/2$ and $c\geq 2$, the following inequalities are true:
\begin{align}\nonumber
\frac{d_2 X^\eta}{\log X}- D X^{c(2/3- \gamma)} &=  \frac{d_2 X^\eta}{\log X}- D X^{1-c/3} = \frac{ X^\eta}{\log X} \left(d_2- DX^{1-\eta-c/3} \log X  \right)\\ \label{eq:NumberPrimes3}
&\geq \frac{{X^\eta}}{\log X} \left(d_2- D X^{-1/6}\log X \right)>0,
\end{align}
since $X\geq X_0$ and $X_0$ is sufficiently large. Then by combining \eqref{eq:NumberPrimes1}, \eqref{eq:NumberPrimes3}, and Theorem~\ref{Theorem-Matomaki} with $x\coloneqq X^c$ and $\gamma\coloneqq \gamma$, there exists a prime number $q\in R$ satisfying \eqref{eq:NumberPrimes2}.
\end{proof}

\begin{proof}[Proof of Lemma~\ref{Lemma-key2}]
Assume that for every $k_0>0$ there exists $k\in \mathcal{I}_b\cap [k_0,\infty)$ such that 
\begin{equation}\label{eq:assumption}
p_{k+1} \notin [ p_{k}^{c_{k+1}}, p_{k}^{c_{k+1}} + p_{k}^{\theta c_{k+1}} ].
\end{equation}
We will find $w\in \mathcal{W}(c_k)$ such that $w<\xi$ by Matom\"{a}ki's construction, which will imply a contradiction to the minimality of $\xi$. 

Let $B$ be as in \eqref{equation:B}, let $\epsilon=\min(1/B,1/4)$, and let $d_2=\min (d_0,d_1)$, where $d_0$ and $d_1$ denote as in Theorems~\ref{Theorem-BakerHarmanPintz} and \ref{Theorem-Matomaki}, respectively. Let $X_0=X_0(B,\epsilon, d_2)$ be as in Lemma~\ref{Lemma-ModifiedMatomaki}. We now take $k_0$ as a sufficiently large positive parameter depending only on $B$, $\epsilon$, and $d_2$. By the assumption \eqref{eq:assumption}, we find an integer $k$ in $\mathcal{I}_b\cap [k_0,\infty)$ satisfying \eqref{eq:assumption}. 
By Lemma~\ref{Lemma-key1}, the inequality $p_{k}^{c_{k+1}}>p_{k+1}$ does not hold, and hence we have
\begin{equation}\label{eq-seed:of:cont}
p_{k}^{c_{k+1}} + p_{k}^{\theta c_{k+1}}<p_{k+1} .
\end{equation}
Then, we set $q_k=p_k$.  Theorem~\ref{Theorem-BakerHarmanPintz} with $x\coloneqq q_k^{c_{k+1}}$ implies that 
\begin{equation}\label{eq:BHP1}
 \# ([q_k^{c_{k+1}}, q_k^{c_{k+1}}+q_k^{\theta c_{k+1}}] \cap \mathcal{P})  \geq \frac{d_0 q_k^{\theta c_{k+1}}}{\log q_k^{c_{k+1}}}.
 \end{equation}
 We may assume that $q_k^{c_{k+1}}\geq X_0=X_0(B,\epsilon, d_2)$ since $k\geq k_0$ and $k_0=k_0(B,\epsilon,d_2)$ is sufficiently large. By \eqref{eq:BHP1} and Lemma~\ref{Lemma-ModifiedMatomaki} with 
\begin{gather*}
E\coloneqq B,\quad c\coloneqq c_{k+2}\in [2,B], \quad \epsilon\coloneqq \epsilon, \quad d_2\coloneqq d_2, \\
X\coloneqq q_k^{c_{k+1}}, \quad \eta\coloneqq \theta\in [1/2,3/4]\subseteq [1/2,1-\epsilon], 
\end{gather*}
there exists $q_{k+1}\in \mathcal{P}$ satisfying
\begin{gather}\label{eq:case:m=k1}
q_k^{c_{k+1}} \leq q_{k+1} \leq q_k^{c_{k+1}}+q_k^{\theta c_{k+1}} (\leq q_k^{c_{k+1}}+q_k^{c_{k+1}-1}), \\ \label{eq:case:m=k2}
\#( [q_{k+1}^{c_{k+2}} , q_{k+1}^{c_{k+2}} + q_{k+1}^{c_{k+2}-1} ] \cap \mathcal{P}) \geq   \frac{d_1 q^{c_{k+2}-1}}{\log q^{c_{k+2}}},
\end{gather}
where the last inequality of \eqref{eq:case:m=k1} follows from $c_{k+1}= b\geq 3$ and $\theta=21/40$.
Let us construct a sequence $(q_{m})_{m=k}^\infty$ of prime numbers satisfying the following properties: for every $m\geq k$, we have
\begin{gather} \label{property-1}
q_m^{c_{m+1}}\leq q_{m+1} \leq q_m^{c_{m+1}}+q_m^{c_{m+1}-1},\\ \label{property-2}
\# ([q_{m+1}^{c_{m+2}},q_{m+1}^{c_{m+2}}+q_{m+1}^{c_{m+2}-1}] \cap \mathcal{P})  \geq  \frac{d_1 q_{m+1}^{c_{m+2}-1}}{\log q_{m+1}^{c_{m+2}}}.
\end{gather}
 In the case $m=k$,  both \eqref{property-1} and \eqref{property-2} are true by \eqref{eq:case:m=k1} and \eqref{eq:case:m=k2}. We assume that there exists a sequence $(q_{m})_{m=k}^M$ of prime numbers satisfying 
\eqref{property-1} and \eqref{property-2} for every $m=k,\ldots , M-1$, where $M\geq k+1$.  Then by applying \eqref{property-2} with $m=M-1$ and Lemma~\ref{Lemma-ModifiedMatomaki} with 
\begin{gather*}
E\coloneqq B, \quad c\coloneqq c_{M+2}\in [2,B],\quad  \epsilon\coloneqq \epsilon, \quad d_2\coloneqq d_2, \\ X\coloneqq q_M^{c_{M+1}}, \quad  \eta\coloneqq (c_{M+1}-1)/c_{M+1}\in [1/2, 1-1/B]\subseteq [1/2,1-\epsilon], 
\end{gather*}
we find that there exists a prime number $q_{M+1}$ satisfying \eqref{property-1} and \eqref{property-2} with $m=M$. By induction, we obtain an infinite sequence $(q_m)_{m=k}^\infty$ of prime numbers satisfying \eqref{property-1} and \eqref{property-2}. 

Furthermore, we set $q_m=p_m$ for every $m=1,2,\ldots, k-1$. By Lemma~\ref{Lemma-key1} and \eqref{property-1}, we obtain 
\[
q_m^{c_{m+1}} \leq q_{m+1} <  (q_{m} +1)^{c_{m+1}}-1
\]
for every $m\in \mathbb{N}$. Therefore, we have 
\begin{equation}\label{eq:prc-inequality}
q_m^{1/C_m} \leq q_{m+1}^{1/C_{m+1}}< (q_{m+1}+1)^{1/C_{m+1}} <(q_{m}+1)^{1/C_{m}}
\end{equation}
for every $m\in \mathbb{N}$, and hence $\lim_{m\to\infty} q_m^{1/C_m}=\lim_{m\to\infty} (q_m+1)^{1/C_m} \eqqcolon w$ exist. In addition, $w\in \mathcal{W}(c_k)$ holds by \eqref{eq:prc-inequality}. On the other hand, combining \eqref{eq-seed:of:cont} and \eqref{eq:case:m=k1}, we have
\[
\lfloor w^{C_{k+1}} \rfloor = q_{k+1}\leq p_k^{c_{k+1}}+p_k^{\theta c_{k+1}} < p_{k+1} = \lfloor \xi ^{C_{k+1}}\rfloor.
\]
This is a contradiction since $\xi$ is the smallest element in $\mathcal{W}(c_k)$.
\end{proof}
\begin{lemma}\label{Lemma-Nearest} There exist $k_1=k_1(\xi)>0$ and $K_0>0$ such that for every integer $k\in \mathcal{I}_b \cap[k_1, \infty)$, we have
\begin{equation}\label{eq:Nearest}
0\leq \xi^{C_k}- p_k\leq K_0 p_k^{-c_{k+1}\theta_b },
\end{equation}
where $\theta_b=1-\theta-1/b>0$. Further, there exists $\gamma>0$ such that for every $k\in \mathcal{I}_b \cap[k_1, \infty)$
\begin{equation}\label{eq:Nearest2}
|\xi^{C_k}- p_k|\leq e^{-\gamma C_{k}}.
\end{equation}

\end{lemma}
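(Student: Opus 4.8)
The plan is to derive \eqref{eq:Nearest} from Lemma~\ref{Lemma-key2} by the same Maclaurin-expansion bootstrap used in the proof of Lemma~\ref{Lemma-simpleNear}, and then to obtain the exponential bound \eqref{eq:Nearest2} by iterating \eqref{eq:Nearest} down a suitable chain of indices. First I would fix $k\in\mathcal{I}_b\cap[k_0,\infty)$ with $k_0$ as in Lemma~\ref{Lemma-key2}, so that $c_{k+1}=b$ and $p_k^{b}\leq p_{k+1}\leq p_k^{b}+p_k^{\theta b}$. Since $p_{k+1}=\lfloor\xi^{C_{k+1}}\rfloor$, this gives $p_k^{b}\leq \xi^{C_{k+1}}\leq p_k^{b}+p_k^{\theta b}+1\leq p_k^{b}\bigl(1+2p_k^{b(\theta-1)}\bigr)$ for $k$ large (using $\theta<1$ and $p_k\to\infty$). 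Taking $b$-th roots and applying $(1+x)^{1/b}=1+O(x)$ for $|x|\le 1$, which is legitimate because $b(\theta-1)<0$, yields $p_k\leq \xi^{C_k}\leq p_k+O(p_k^{b(\theta-1)+1})=p_k+O(p_k^{1-b(1-\theta)})$. The left inequality $\xi^{C_k}\ge p_k$ is immediate from $p_k=\lfloor\xi^{C_k}\rfloor$, so we get the first display of \eqref{eq:Nearest} with exponent $1-b(1-\theta)=-(b(1-\theta)-1)$; and one checks $b(1-\theta)-1=b\theta_b$ with $\theta_b=1-\theta-1/b$, which is positive since $\theta=21/40$ and $b\ge 3$ give $1-\theta-1/b\ge 19/40-1/3>0$. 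So the first assertion follows with $k_1$ taken at least as large as this $k_0$ and $K_0$ the implied constant.

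For \eqref{eq:Nearest2} I would convert the polynomial-in-$p_k$ bound into an exponential-in-$C_k$ bound. The key observation is that $p_k=\lfloor\xi^{C_k}\rfloor\ge \xi^{C_k}-1\ge \xi^{C_k/2}$ for $k$ large (since $\xi>1$), hence $p_k^{-b\theta_b}\le \xi^{-(b\theta_b/2)C_k}$, and therefore $|\xi^{C_k}-p_k|\le K_0\,\xi^{-(b\theta_b/2)C_k}=K_0 e^{-(b\theta_b/2)(\log\xi)C_k}$. Choosing any $\gamma$ with $0<\gamma<(b\theta_b/2)\log\xi$ absorbs the constant $K_0$ for $k$ large, giving \eqref{eq:Nearest2} for all $k\in\mathcal{I}_b\cap[k_1,\infty)$ after enlarging $k_1$ if necessary. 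Note that here we only need the estimate on the infinite set $\mathcal{I}_b$, where Lemma~\ref{Lemma-key2} applies directly, so no iteration along indices outside $\mathcal{I}_b$ is actually required — the statement is only claimed for $k\in\mathcal{I}_b\cap[k_1,\infty)$.

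The main obstacle is really just bookkeeping: making sure the "sufficiently large $k$" hypotheses line up (the Maclaurin step needs $p_k^{b(\theta-1)}\le 1$; replacing $\xi^{C_{k+1}}$ by $p_{k+1}+1$ in the upper bound and then by $p_k^b+p_k^{\theta b}+1$ uses \eqref{eq:key2}, which only holds for $k\in\mathcal{I}_b$ beyond $k_0(\xi)$), and verifying the arithmetic identity $b(1-\theta)-1=b\theta_b$ together with the sign $\theta_b>0$. One subtlety worth stating explicitly: \eqref{eq:key2} gives $p_{k+1}\le p_k^{c_{k+1}}+p_k^{\theta c_{k+1}}$, and since $p_{k+1}=\lfloor\xi^{C_{k+1}}\rfloor$ we get $\xi^{C_{k+1}}<p_{k+1}+1\le p_k^{c_{k+1}}+p_k^{\theta c_{k+1}}+1$; bounding the "$+1$" by $p_k^{\theta c_{k+1}}$ (valid for $k$ large since $\theta c_{k+1}\ge 2\theta>1$) is what produces the clean factor $1+2p_k^{b(\theta-1)}$. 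Beyond that the proof is a direct transcription of the argument already given for Lemma~\ref{Lemma-simpleNear}, with the exponent $3$ replaced by $b=c_{k+1}$ throughout.
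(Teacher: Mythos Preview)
Your argument is correct. The derivation of \eqref{eq:Nearest} matches the paper's proof essentially line for line: both bound $\xi^{C_{k+1}}$ above by $p_k^{b}\bigl(1+2p_k^{b(\theta-1)}\bigr)$ via Lemma~\ref{Lemma-key2}, take $b$-th roots using the Maclaurin expansion, and identify the exponent $b(\theta-1)+1=-b\theta_b$.

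For \eqref{eq:Nearest2} you take a slightly different route. The paper iterates Lemma~\ref{Lemma-key1} down the full chain
\[
p_k^{c_{k+1}} \ge p_{k-1}^{c_{k+1}c_k} \ge \cdots \ge p_1^{C_{k+1}/c_1},
\]
i.e.\ $p_k\ge p_1^{C_k/c_1}$, and substitutes this into \eqref{eq:Nearest} to obtain exponential decay with base $p_1$. You instead use the direct one-line inequality $p_k=\lfloor\xi^{C_k}\rfloor\ge\xi^{C_k}-1\ge\xi^{C_k/2}$ for large $k$, which bypasses the chain entirely and gives decay with base~$\xi$. Both are valid; your shortcut is a bit more self-contained, while the paper's version parallels the earlier argument in Lemma~\ref{Lemma-simpleNear} and makes the dependence on the initial prime explicit. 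Since the constant~$\gamma$ is never tracked in the sequel, the difference is purely cosmetic.
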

\begin{proof}Let $k_0(\xi)$ be as in Lemma~\ref{Lemma-key2}, and let $k_1=k_1(\xi)>k_0(\xi)$ be a sufficiently large parameter. Since $p_{k}= \lfloor \xi^{C_k}\rfloor$ for every $k\in \mathbb{N}$, by Lemma~\ref{Lemma-key2}, each $k \in \mathcal{I}_b\cap [k_1,\infty)$ satisfies
\begin{equation}\label{eq:N1}
p_{k}^{c_{k+1}} \leq p_{k+1} \leq \xi^{C_{k+1}} \leq p_{k+1}+1 \leq p_{k}^{c_{k+1}} + 2p_{k}^{\theta c_{k+1}}.
\end{equation}
Therefore, for every $k \in \mathcal{I}_b\cap [k_1,\infty)$, we have 
\begin{equation}\label{eq:pkA}
p_{k}  \leq \xi^{C_{k}} \leq (p_{k}^{c_{k+1}} + 2p_{k}^{\theta c_{k+1}})^{1/c_{k+1}}= p_k (1+2p_k^{c_{k+1}(\theta-1)} )^{1/c_{k+1}} \leq p_k + 2p_k^{c_{k+1}(\theta -1)+1}.
\end{equation}
We note that $c_{k+1}(\theta -1)+1 = c_{k+1}(\theta -1+\frac{1}{b}) = -c_{k+1}\theta_b<0 $ since $c_{k+1}= b$ and $b\geq 3$. Therefore, we get \eqref{eq:Nearest}. Furthermore, Lemma~\ref{Lemma-key1} leads to 
\begin{equation}\label{eq:decreasing}
p_k^{c_{k+1}} \geq p_{k-1}^{c_{k+1}c_{k}} \geq p_{k-2}^{c_{k+1}c_kc_{k-1}} \geq \cdots \geq p_{1}^{C_{k+1}/c_1}.
\end{equation}
Combining \eqref{eq:Nearest} and \eqref{eq:decreasing}, we obtain \eqref{eq:Nearest2}.
\end{proof}


\section{Proof of Theorem~\ref{Theorem-main1}}\label{Section-Proof}

\begin{lemma}\label{Lemma-trans-or-Pisot} Let $b$ be an integer greater than or equal to $3$. Let $(c_k)_{k=1}^\infty$ be a sequence of integers satisfying \eqref{Condition-c1}, \eqref{Condition-c2}, and \eqref{Condition-c3} in Theorem~\ref{Theorem-main1}. Let $\xi$ be the smallest element in $\mathcal{W}(c_k)$. Then the following properties are true:
\begin{itemize}
\item if $b\geq 5$, then $\xi$ is transcendental;
\item if $b= 4$, then  either $\xi$ is transcendental, or there exists $m\in \mathbb{N}$ such that $\xi^{C_{m}}$ is a Pisot number of degree $2$; 
\item if $b= 3$, then  either $\xi$ is transcendental, or  there exists $m\in \mathbb{N}$ such that $\xi^{C_{m}}$ is a Pisot number of degree $2$ or $3$.
\end{itemize}

\end{lemma}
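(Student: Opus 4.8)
The plan is to combine the ``closeness to a prime'' estimate from Lemma~\ref{Lemma-Nearest} with the quantitative lower bounds for $\|q\alpha^n\|$ coming from Corvaja--Zannier, packaged here as Theorem~\ref{Theorem-Dubickas1}. Suppose $\xi$ is algebraic but not transcendental. Fix a large $m$ and set $\alpha = \xi^{C_m}$ and $q$ a suitable fixed positive integer (the denominator of $\alpha$, or $1$ once we know $\alpha$ is an algebraic integer); the point is that $\alpha^{1}, \alpha^{C_{m+1}/C_m}, \alpha^{C_{m+2}/C_m}, \ldots$ is, along the subsequence $k \in \mathcal{I}_b$, an increasing sequence of positive integers $s_k = C_k/C_m$. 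Applying Theorem~\ref{Theorem-Dubickas1} with this sequence and $\epsilon < \gamma$ (where $\gamma$ is from \eqref{eq:Nearest2}) gives a dichotomy: either some $\alpha^{s_m}$, i.e.\ some $\xi^{C_m}$, is a Pisot number, or $\|q\xi^{C_k}\| > e^{-\epsilon C_k/C_m} = e^{-\epsilon' C_k}$ for all large $k \in \mathcal{I}_b$. The latter contradicts \eqref{eq:Nearest2}, which forces $\|\xi^{C_k}\| \le |\xi^{C_k} - p_k| \le e^{-\gamma C_k}$ and hence $\|q\xi^{C_k}\| \le q e^{-\gamma C_k}$, once $\epsilon'$ is chosen below $\gamma$. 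So if $\xi$ is not transcendental, some $\xi^{C_m}$ is a Pisot number.

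First I would establish that $\xi$ is not an integer, nor even a rational non-integer, by the elementary argument already used for $w$ in the proof of Theorem~\ref{Theorem-simple} (if $\xi \in \mathbb{Z}$ then $p_{m+1} = \lfloor \xi^{C_{m+1}} \rfloor = p_m^{c_{m+1}}$ is composite; the rational non-integer case is killed by Mahler's Theorem~\ref{Theorem-Mahler} combined with \eqref{eq:Nearest2}). Then I would run the Pisot dichotomy above to produce $m$ with $\beta := \xi^{C_m}$ a Pisot number, say of degree $\ell \ge 2$ with conjugates $\beta = \beta_1 > 1 > |\beta_2| \ge \cdots \ge |\beta_\ell|$. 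The remaining task is to bound $\ell$: show $\ell \le 2$ when $b \ge 5$, $\ell \le 2$ when $b = 4$, and $\ell \le 3$ when $b = 3$.

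For the degree bound I would use Lemma~\ref{lemma-Dubickas2} together with the precise exponent $\theta_b = 1 - \theta - 1/b$ in \eqref{eq:Nearest}. For $k \in \mathcal{I}_b$ with $k$ large and $k \equiv 0$ suitably so that $C_k/C_m$ is a positive integer $n$, we have $\xi^{C_k} = \beta^n$, and $\operatorname{Tr}(\beta^n) = \beta_1^n + \cdots + \beta_\ell^n$ is a rational integer. Hence $\|\beta^n\| = \|\beta_1^n\| = \|\beta_2^n + \cdots + \beta_\ell^n\|$, and for large $n$ this equals $|\beta_2^n + \cdots + \beta_\ell^n|$ since that sum tends to $0$. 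Lemma~\ref{lemma-Dubickas2} gives $|\beta_2^n + \cdots + \beta_\ell^n| \ge |\beta_2|^n n^{-\lambda}$, while \eqref{eq:Nearest} gives $\|\xi^{C_k}\| \le K_0 p_k^{-c_{k+1}\theta_b} = K_0 p_k^{-b\theta_b}$. Using $p_k \ge \xi^{C_k} - 1 \gg \beta_1^n$ and $|\beta_2| \ge \beta_1^{-(\ell-1)}$ (the product of all conjugates has absolute value $\ge 1$, being a nonzero rational integer, so $|\beta_2|^{\ell-1} \ge |\beta_2 \cdots \beta_\ell| \ge \beta_1^{-1}$... adjusting indices), one gets $\beta_1^{-n(\ell-1)} n^{-\lambda} \le |\beta_2|^n n^{-\lambda} \le \|\beta^n\| \ll \beta_1^{-n b \theta_b}$, which for large $n$ forces $\ell - 1 \ge b\theta_b$, i.e.\ $\ell \ge 1 + b(1 - \theta - 1/b) = b(1-\theta)$. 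Wait --- I want the \emph{reverse} inequality; re-reading, the contradiction is that $\ell - 1 < b\theta_b$ is impossible, so one concludes $\ell - 1 \ge b\theta_b = b(1-\theta) - 1$, hence $\ell \ge b(1-\theta)$. With $\theta = 21/40$ this is $\ell \ge 19b/40$, giving $\ell \ge 2$ automatically for $b \ge 5$ --- so this direction bounds $\ell$ from \emph{below}, which is not what I want.

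Let me reconsider: the correct use must be that the Pisot structure makes $\|\beta^n\|$ \emph{large} (at least $|\beta_2|^n n^{-\lambda}$), so for this to be compatible with $\|\beta^n\| \le K_0 \beta_1^{-nb\theta_b}$ we need $|\beta_2|^n n^{-\lambda} \le K_0\beta_1^{-nb\theta_b}$, forcing $|\beta_2| \le \beta_1^{-b\theta_b}$ (comparing exponential rates). Combined with $|\beta_2 \cdots \beta_\ell| \ge 1$ hence $|\beta_2|^{\ell-1} \ge |\beta_2\cdots\beta_\ell| \ge \beta_1^{-1}$ (using $|\beta_j| \le |\beta_2|$ for $j \ge 2$), we get $\beta_1^{-1} \le |\beta_2|^{\ell - 1} \le \beta_1^{-(\ell-1)b\theta_b}$, hence $(\ell-1)b\theta_b \le 1$, i.e.\ $\ell \le 1 + 1/(b\theta_b) = 1 + 1/(b(1-\theta) - 1) = 1 + 40/(19b - 40)$. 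For $b \ge 5$ this gives $\ell \le 1 + 40/55 < 2$, so $\ell = 1$, contradicting $\ell \ge 2$ --- meaning \emph{no} Pisot number can occur, so $\xi$ is transcendental. For $b = 4$: $\ell \le 1 + 40/36 < 3$, so $\ell \le 2$. For $b = 3$: $\ell \le 1 + 40/17 < 4$, so $\ell \le 3$. This matches the claimed statement exactly.

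Thus I would organize the proof as: (1) elementary exclusion of $\xi \in \mathbb{Q}$ via Theorems~\ref{Theorem-simple}-style argument, \ref{Theorem-Mahler}, and \eqref{eq:Nearest2}; (2) apply Theorem~\ref{Theorem-Dubickas1} along the subsequence $(C_k/C_m)_{k \in \mathcal{I}_b}$ (choosing $m$ so denominators clear) with $\epsilon$ small, to conclude that if $\xi$ is algebraic then some $\beta = \xi^{C_m}$ is Pisot; (3) apply Lemma~\ref{lemma-Dubickas2} to $\beta^n$ with $n = C_k/C_m$ for $k \in \mathcal{I}_b$, compare with the sharp upper bound \eqref{eq:Nearest} via $p_k \gg \beta_1^n$, and extract $(\deg\beta - 1)\,b\theta_b \le 1$ from the conjugate-product lower bound $|\beta_2 \cdots \beta_\ell| \ge 1$; (4) plug in $\theta = 21/40$ to read off $\deg\beta \le 1$ for $b \ge 5$ (contradiction, so transcendental), $\deg\beta \le 2$ for $b = 4$, and $\deg\beta \le 3$ for $b = 3$. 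The main obstacle I anticipate is step~(3): one must be careful that $C_k/C_m$ is genuinely an integer for the relevant $k$ (handled by taking $m$ small, e.g.\ $m=1$, or restricting to $k$ in the appropriate residue progression and noting $\mathcal{I}_b$ is infinite so such $k$ still appear infinitely often), and that the auxiliary integer $q$ in Theorem~\ref{Theorem-Dubickas1} does not spoil the comparison with $\|\beta^n\|$ in Lemma~\ref{lemma-Dubickas2}, which needs $\|\beta^n\|$ itself rather than $\|q\beta^n\|$; this is fine since once $\beta$ is known to be a Pisot number it is an algebraic integer and we may take $q = 1$, but the logical ordering (Pisot conclusion must come before we drop $q$) needs to be stated cleanly. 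I would also double-check the elementary fact $|\beta_2 \cdots \beta_\ell| = |N(\beta)|/\beta_1 \ge 1/\beta_1$ and hence $|\beta_2|^{\ell-1} \ge 1/\beta_1$, which is where the degree enters.
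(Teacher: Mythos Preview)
Your approach is correct and essentially identical to the paper's: apply Theorem~\ref{Theorem-Dubickas1} to force a Pisot power $\beta=\xi^{C_m}$, then combine Lemma~\ref{lemma-Dubickas2} with the sharp bound \eqref{eq:Nearest} to obtain $|\beta_2|\le\beta_1^{-b\theta_b}$ and hence $(\ell-1)b\theta_b\le 1$ via $|\beta_1\cdots\beta_\ell|\ge 1$. Two simplifications the paper makes that you may want to adopt: (i) since the $c_k$ are integers by hypothesis, every $C_k$ is an integer, so Theorem~\ref{Theorem-Dubickas1} can be applied directly with $\alpha:=\xi$, $q:=1$, $s_k:=C_k$ with no need to pass to $\xi^{C_m}$ or worry about denominators; (ii) your separate exclusion of $\xi\in\mathbb{Q}$ via Mahler is unnecessary, since the case $\ell=1$ (i.e.\ $\beta\in\mathbb{Z}$) is ruled out inside the same framework by the observation $p_{m+1}=p_m^{c_{m+1}}$, and Theorem~\ref{Theorem-Dubickas1} already subsumes the rational non-integer case.
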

\begin{proof} Let $k_1$ be as in Lemma~\ref{Lemma-Nearest}. Assume that $\xi$ is a real algebraic number. By the definition of $\mathcal{W}(c_k)$, $\xi>1$ holds.  By Lemma~\ref{Lemma-Nearest},  there exists $\gamma>0$ such that for every  $k\in \mathcal{I}_b \cap[k_1, \infty)$
\[
|\xi^{C_k} -p_k| \leq e^{-\gamma C_{k}}.
\]
Therefore, Theorem~\ref{Theorem-Dubickas1} with $\alpha\coloneqq \xi$, $q\coloneqq 1$, $s_k\coloneqq C_k$, and $\epsilon\coloneqq \gamma$ implies that $ \xi^{C_m}$ is a Pisot number for some $m\in \mathbb{N}$. Let $\beta=\xi^{C_m}$, and let $\ell$ be the degree of $\beta$. We now show the following claim, where $\theta_b$ is defined in Lemma~\ref{Lemma-Nearest}.\\

\noindent\textbf{Claim:} We have $2\leq \ell \leq (b\theta_b)^{-1}+1$.  \\

If $\ell=1$, then $\beta(= \xi^{C_m})$ is a positive integer. Therefore, 
\[
p_{m+1}= \lfloor \xi^{C_{m+1}} \rfloor =(  \xi^{C_{m}})^{c_{m+1}}=p_m^{c_{m+1}}, 
\]
which is a contradiction since $p_{m+1}$ is a prime number and $c_{m+1}\geq 2$. Therefore $\ell \geq 2$. 

We next assume that $\ell > (b\theta_b)^{-1}+1$. Let $\beta_1=\beta, \beta_2,  \ldots ,  \beta_\ell$ be the conjugates over $\mathbb{Q}$ of $\beta$ labeled so that $\beta_1>1>|\beta_2|\geq  \cdots \geq |\beta_\ell|$. For every $k\geq m$, since $C_k/C_m$ is a positive integer, we obtain 
\[
\beta_1^{C_k/C_m} + \sum_{j=2}^\ell  \beta_j^{C_k/C_m} \in \mathbb{Z}.  
\] 
Let $k$ be a sufficiently large integer in $\mathcal{I}_b\cap[m,\infty)$. By Lemma~\ref{Lemma-Nearest}, 
\begin{equation}\label{eq:claim1}
\|\sum_{j=2}^\ell  \beta_j^{C_k/C_m}\| = \|\beta_1^{C_k/C_m}  \|=\|\xi^{C_k}  \|\leq  K_0 p_k^{-c_{k+1}\theta_b}. 
\end{equation}
By \eqref{eq:N1},  we recall that $\xi^{C_{k+1}} \leq  p_k^{c_{k+1}}+2p_k^{\theta c_{k+1}}$, to obtain
\begin{equation}\label{eq:claim2}
\xi^{C_{k+1}} \leq  p_k^{c_{k+1}}(1+2p_k^{(\theta-1) c_{k+1}})\leq 2 p_k^{c_{k+1}}.
\end{equation}
Therefore, combining \eqref{eq:claim1}, \eqref{eq:claim2}, and $c_{k+1}=b$, we have 
\[
\|\sum_{j=2}^\ell  \beta_j^{C_k/C_m}\| \leq  K_1 \xi^{-C_{k+1}\theta_b} = K_1 \beta_1^{-b\theta_b C_{k}/C_m  }  
\]
for some constant $K_1>0$. By Lemma~\ref{lemma-Dubickas2} with $n\coloneqq C_k/C_m$, there exists $\lambda=\lambda(\beta)>0$ such that 
\[
\left\|\sum_{j=2}^\ell  \beta_j^{C_k/C_m}\right\|=\left|\sum_{j=2}^\ell  \beta_j^{C_k/C_m}\right| \geq |\beta_2|^{C_k/C_m} (C_k/C_m)^{-\lambda} 
\]
since $k$ is sufficiently large. By taking $k\to \infty$ on $k\in \mathcal{I}_b$, we conclude that $|\beta_2| \leq \beta_1^{-b\theta_b}$. Since $|\beta_2| \geq |\beta_3|\geq \cdots \geq |\beta_l|$ and $\ell >  (b\theta_b)^{-1}+1$, we have $|\beta_2\cdots \beta_\ell|\leq \beta_1^{-(\ell-1) b\theta_b}< \beta_1^{-1}$. This is a contradiction to $|\beta_1\cdots \beta_\ell| \geq 1$. Thus, we obtain the claim. \\[-5pt]

\noindent\underline{Case} ($b\geq 5$): We now recall that $\theta_b=1-\theta-1/b=19/40-1/b$, and so 
\[
b\theta_b \geq \frac{19\cdot 5}{40}-1 \geq  \frac{19}{8}-1 = \frac{11}{8}\quad \therefore  \quad (b\theta_b)^{-1}+1\leq  \frac{8}{11}+1<2,
\]
a contradiction to the claim. Therefore, $\xi$ is transcendental. 

\noindent\underline{Case} ($b= 4$): By simple calculation, 
\[
b\theta_b = \frac{19\cdot 4}{40}-1 =  \frac{19}{10}-1 = \frac{9}{10}\quad \therefore  \quad (b\theta_b)^{-1}+1= \frac{10}{9}+1<3.
\]

\noindent\underline{Case} ($b=3$): We also observe that
\[
b\theta_b = \frac{19\cdot 3}{40}-1 =  \frac{57}{40}-1 = \frac{17}{40}\quad \therefore  \quad \ (b\theta_b)^{-1}+1= \frac{40}{17}+1<4.
\]
This completes the proof. 
\end{proof}

Let us discuss the case when $\xi^{C_m}$ is a Pisot number of degree $2$ for some $m\in \mathbb{N}$. 

\begin{lemma}\label{Lemma-Trace}
Let  $b$, $(c_k)_{k=1}^\infty$, and $\xi$ be as in Lemma~\ref{Lemma-trans-or-Pisot}. Suppose that there exists $m\in \mathbb{N}$ such that $\xi^{C_m}$ is a Pisot number. Let $\beta= \xi^{C_m}$ and $\beta_1=\beta, \beta_2,\ldots, \beta_\ell$ be the conjugates over $\mathbb{Q}$ of $\beta$. We set 
\[
t_k=\beta_{1}^{C_k/C_m }  +\cdots + \beta_{\ell}^{C_k/C_m }\in \mathbb{Z}
\]
for every $k\geq m$. Then we have $t_k=\lfloor \xi^{C_k} \rfloor =p_k$ for every sufficiently large $k\in \mathbb{N}$. 
\end{lemma}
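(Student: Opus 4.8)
The plan is to exploit the Pisot property to force $\xi^{C_k}$ right next to the integer $t_k$, to identify $t_k$ with $p_k$ along the set $\mathcal{I}_b$ via Lemma~\ref{Lemma-Nearest}, and then to propagate this identification to all large $k$. Write $N_k=C_k/C_m\in\mathbb{N}$ for $k\ge m$, so that $\xi^{C_k}=\beta_1^{N_k}$ and $\varepsilon_k\coloneqq t_k-\xi^{C_k}=\sum_{j=2}^{\ell}\beta_j^{N_k}$. Since $\beta=\beta_1$ is Pisot, $|\beta_j|<1$ for $j\ge2$, hence $|\varepsilon_k|\le(\ell-1)|\beta_2|^{N_k}\to0$. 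Consequently, for all large $k$ the integer $t_k$ is the unique integer at distance $<1/2$ from $\xi^{C_k}$, and since $\xi^{C_k}=t_k-\varepsilon_k$ we have $\lfloor\xi^{C_k}\rfloor=t_k$ if and only if $\varepsilon_k\le0$. As $p_k=\lfloor\xi^{C_k}\rfloor$, the lemma is equivalent to the assertion that $\varepsilon_k\le0$ for every sufficiently large $k$.

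First I would treat $k\in\mathcal{I}_b$. For such $k$ large, Lemma~\ref{Lemma-Nearest} gives $0\le\xi^{C_k}-p_k\le K_0 p_k^{-c_{k+1}\theta_b}<1/2$; combined with $|\varepsilon_k|<1/2$, both integers $p_k$ and $t_k$ lie within $1/2$ of $\xi^{C_k}$, so $t_k=p_k$ and $\varepsilon_k=p_k-\xi^{C_k}\le0$. Moreover $\varepsilon_k\ne0$, for otherwise $\xi^{C_k}=p_k\in\mathbb{Z}$ would force $p_{k+1}=\lfloor\xi^{C_{k+1}}\rfloor=p_k^{c_{k+1}}$ with $c_{k+1}\ge2$, contradicting the primality of $p_{k+1}$ (the observation already used for $\ell=1$ in the proof of Lemma~\ref{Lemma-trans-or-Pisot}). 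Hence $\varepsilon_k<0$ for all large $k\in\mathcal{I}_b$.

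It remains to propagate $\varepsilon_k<0$ to all large $k$, using the multiplicativity $\beta_j^{N_{k+1}}=(\beta_j^{N_k})^{c_{k+1}}$. When $\ell=2$ -- the case relevant to this section -- one has $\varepsilon_k=\beta_2^{N_k}$ and thus $\varepsilon_{k+1}=\varepsilon_k^{c_{k+1}}$; so if $\varepsilon_k>0$ for some large $k$ then $\varepsilon_{k'}>0$ for every $k'\ge k$, contradicting the previous paragraph applied to the infinitely many $k'\in\mathcal{I}_b$ with $k'>k$. Hence $\varepsilon_k<0$, so $t_k=p_k$, for all large $k$, which settles the lemma when $\deg\beta=2$. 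By the Claim in the proof of Lemma~\ref{Lemma-trans-or-Pisot} we always have $\ell\le3$, so only $\ell=3$ remains; there the two subdominant conjugates are either real -- necessarily with $|\beta_2|>|\beta_3|$, since the alternative $\beta_3=-\beta_2$ would make $(x-\beta_1)(x^2-\beta_2^2)$ divide the minimal polynomial of $\beta_1$ and so force $\beta_1\in\mathbb{Q}$ -- or a complex-conjugate pair $\beta_3=\overline{\beta_2}$. In the real subcase the sign of $\varepsilon_k$ is eventually that of $\beta_2^{N_k}$, and propagating as above forces $\beta_2<0$ and $c_i$ odd for all large $i$ (here using that $\mathcal{I}_b$ is cofinal in $\mathbb{N}$), whence $N_k$ is odd and $\varepsilon_k<0$ for all large $k$.

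The step I expect to be the main obstacle is the complex-conjugate subcase $\beta_3=\overline{\beta_2}$ of the degree-$3$ case (which, by the Claim just cited, can only occur when $b=3$). Here $\varepsilon_k=2|\beta_2|^{N_k}\cos\!\big(N_k\arg\beta_2\big)$, and the naive sign-propagation breaks down because the argument of $\beta_2$ need not interact nicely with the multiplicative growth of $N_k$. My plan for this subcase is to squeeze $|\varepsilon_k|$ along $\mathcal{I}_b$ between the lower bound $|\beta_2|^{N_k}N_k^{-\lambda}$ of Lemma~\ref{lemma-Dubickas2} and the upper bound $K_0 p_k^{-b\theta_b}$ of Lemma~\ref{Lemma-Nearest}, and to combine this (using $p_k=\beta_1^{N_k}+O(1)$ and the fact that $\beta_1|\beta_2|^2$ is a positive integer) with the identity $\varepsilon_k=-(\xi^{C_k}-p_k)$ to pin $|\beta_2|$ down tightly enough that no Pisot number of degree $3$ can meet the resulting constraints; this would exclude the complex subcase and complete the proof. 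Since the application in this section only uses $\deg\beta=2$, the clean argument above already suffices for its purposes.
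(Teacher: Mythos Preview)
Your argument for $k\in\mathcal{I}_b$ is the paper's proof: both use Lemma~\ref{Lemma-Nearest} to place $\xi^{C_k}$ within $K_0p_k^{-c_{k+1}\theta_b}<1/2$ of $p_k$ and combine this with $|\varepsilon_k|<1/2$ to force $t_k=p_k$. The paper phrases this as a sign dichotomy on $\delta_k=\varepsilon_k$, while you phrase it as ``the unique integer within $1/2$''; these are equivalent.

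What you add is the propagation from $k\in\mathcal{I}_b$ to all large $k$. The paper's proof does \emph{not} do this: it fixes $k\in\mathcal{I}_b\cap[m,\infty)$ throughout and never treats general $k$, so as written it only establishes $t_k=p_k$ for large $k\in\mathcal{I}_b$ rather than the stated ``every sufficiently large $k\in\mathbb{N}$''. Your $\ell=2$ propagation via $\varepsilon_{k+1}=\varepsilon_k^{\,c_{k+1}}$ is correct, and it is precisely what the application in Lemma~\ref{Lemma-not-Pisot} (case $b=3$) requires, since there one uses both $t_k=p_k$ and $t_{k+1}=p_{k+1}$ with $k\in\mathcal{I}_3$ but $k+1$ not necessarily in $\mathcal{I}_3$. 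Your real $\ell=3$ subcase is also fine; for the excluded possibility $\beta_3=-\beta_2$ one can note more directly that then $\beta_1=\beta_1+\beta_2+\beta_3\in\mathbb{Z}$.

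The complex $\ell=3$ subcase you explicitly leave as a sketch, and the plan you outline is unlikely to close on its own: squeezing $|\varepsilon_k|$ between Lemma~\ref{lemma-Dubickas2} and Lemma~\ref{Lemma-Nearest} is exactly how the Claim in Lemma~\ref{Lemma-trans-or-Pisot} obtains $|\beta_2|\le\beta_1^{-b\theta_b}$, and for $b=3$ this does not exclude degree~$3$ --- indeed, excluding the degree-$3$ Pisot case is the open problem of Remark~\ref{Remark-b=3}. That said, this is not a gap relative to the paper: the paper neither proves the lemma for $k\notin\mathcal{I}_b$ nor ever applies it with $\ell=3$, so your completed $\ell=2$ argument already covers everything the paper proves and uses, and in fact patches a small omission in the paper's own proof.
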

\begin{proof}
Let $\delta_k = \sum_{j=2}^\ell \beta_j^{C_k/C_m }$ for every $k\geq m$. Let $k$ be a sufficiently large integer in $\mathcal{I}_b\cap [m,\infty)$. Since $t_k$ and $\beta_{1}^{C_k/C_m } $ are real numbers, $\delta_k$ is also a real number. Further, we recall that $|\beta_j|<1$ for every $2\leq j\leq \ell$, which implies that $\delta_k\in (-1/2,1/2)$ since $k$ is sufficiently large. If $0<\delta_k<1/2 $, then by \eqref{eq:Nearest}, $0\leq \xi^{C_{k}} - p_k \leq K_0 p_k^{-c_{k+1}\theta_b}$ holds for some constant $K_0>0$. Therefore, 
\[
\delta_k\leq t_k -p_k \leq \delta_k + K_0 p_k^{-c_{k+1}\theta_b}.
\]
Since we have $\delta_k>0$, $t_k -p_k\in \mathbb{Z}$, and $-c_{k+1}\theta_b<0$, this is a contradiction by choosing $k$ large enough.  Thus, $-1/2<\delta_k\leq 0 $ holds and $\xi^{C_k}=\beta_{1}^{C_k/C_m }=t_k -\delta_k$, and hence $t_k =\lfloor \xi^{C_k} \rfloor=p_k$. \end{proof}

\begin{lemma}\label{Lemma-not-Pisot}
Let $b$, $(c_k)_{k=1}^\infty$,  and $\xi$ be as in Lemma~\ref{Lemma-trans-or-Pisot}.  If $b\in \{3,4\}$, then there does not exist $m\in \mathbb{N}$ such that $\xi^{C_{m}}$ is a Pisot number of degree $2$. 
\end{lemma}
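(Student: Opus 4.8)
The plan is to assume, for contradiction, that some $m \in \mathbb{N}$ makes $\beta := \xi^{C_m}$ a Pisot number of degree $2$, and derive a contradiction from the arithmetic of the sequence $p_k = \lfloor \xi^{C_k}\rfloor$. Write the minimal polynomial of $\beta$ as $x^2 - tx + N$ with $t = \beta_1 + \beta_2 \in \mathbb{Z}$ and $N = \beta_1\beta_2 \in \mathbb{Z}$, where $\beta_1 = \beta > 1 > |\beta_2|$. Since $\beta_1\beta_2 = N$ is a nonzero integer and $|\beta_2| < 1$, we have $|N| \geq 1$; combined with $|\beta_2| < 1 < \beta_1$ this forces $|N| = 1$ unless $\beta_2 = 0$, but $\beta_2 = 0$ would make $\beta$ rational, contradicting degree $2$. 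So $N = \pm 1$, i.e. $\beta$ is a unit. By Lemma~\ref{Lemma-Trace}, for all sufficiently large $k$ we have $p_k = t_k = \beta_1^{C_k/C_m} + \beta_2^{C_k/C_m}$.

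The key step is then to play the recursion $p_{k+1}$ versus $p_k^{c_{k+1}}$ off against the near-multiplicativity of the trace sequence. Set $n_k = C_k/C_m$ and $\delta_k = \beta_2^{n_k}$, so $p_k = \beta_1^{n_k} + \delta_k$ with $\delta_k \to 0$. From Lemma~\ref{Lemma-Nearest} (equation \eqref{eq:Nearest}), for $k \in \mathcal{I}_b$ large we have $0 \le \xi^{C_k} - p_k = \beta_1^{n_k} - p_k \le K_0 p_k^{-c_{k+1}\theta_b}$, so in fact $-K_0 p_k^{-b\theta_b} \le \delta_k \le 0$ along $\mathcal{I}_b$; in particular $|\delta_k|$ is much smaller than $1/p_k$ for $k \in \mathcal{I}_b$. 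Now raise $p_k = \beta_1^{n_k} + \delta_k$ to the power $c_{k+1} = b$ for $k \in \mathcal{I}_b$: since $n_{k+1} = b\,n_k$,
\[
p_k^{b} = \bigl(\beta_1^{n_k} + \delta_k\bigr)^{b} = \beta_1^{n_{k+1}} + b\,\beta_1^{(b-1)n_k}\delta_k + O\!\bigl(\beta_1^{(b-2)n_k}\delta_k^2\bigr).
\]
On the other hand $p_{k+1} = \beta_1^{n_{k+1}} + \delta_{k+1}$ with $|\delta_{k+1}| < 1$, and $p_{k+1} \ge p_k^b$ with $p_{k+1} - p_k^b$ an integer. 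Comparing, $p_{k+1} - p_k^b = \delta_{k+1} - b\beta_1^{(b-1)n_k}\delta_k + O(\beta_1^{(b-2)n_k}\delta_k^2)$; using $|\delta_k| \le K_0 p_k^{-b\theta_b}$ and $p_k \asymp \beta_1^{n_k}$, the term $b\beta_1^{(b-1)n_k}\delta_k$ has size at most $\asymp \beta_1^{(b-1)n_k} \beta_1^{-b\theta_b n_k} = \beta_1^{((b-1) - b\theta_b)n_k}$. For $b \in \{3,4\}$ one checks $b\theta_b \le 9/10 < 1 \le b - 1$, so this exponent is positive and the term blows up — it is $\gg 1$ — hence $p_{k+1} - p_k^b$ is a nonzero integer for large $k \in \mathcal{I}_b$. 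This is not yet a contradiction by itself, so the real content is to pin down the sign and extract primality information.

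The finishing move: since $\delta_k \le 0$ along $\mathcal{I}_b$ we get $b\beta_1^{(b-1)n_k}\delta_k \le 0$, so $p_k^b = \beta_1^{n_{k+1}} + (\text{nonpositive main correction}) + (\text{smaller})$, which should force $p_k^b \le \beta_1^{n_{k+1}}$ and hence (taking floors, using $\delta_{k+1}$ small) pin $p_{k+1}$ to an explicitly controlled value; alternatively, one argues that $p_{k+1} = t_{k+1}$ is itself expressible via the Newton/recurrence relation $t_{k+1}$ in terms of $t_k$ and $N^{n_k}$, namely using $\beta_1^{n_{k+1}} + \beta_2^{n_{k+1}} = $ (a polynomial in $\beta_1^{n_k} + \beta_2^{n_k}$ and $(\beta_1\beta_2)^{n_k} = N^{n_k} = \pm 1$ of degree $b$ with integer coefficients), so $p_{k+1} = F(p_k, \pm 1)$ where $F(x,\varepsilon) = x^b - (\text{lower order in } x \text{ with coefficients involving } \varepsilon)$. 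Since this exhibits $p_{k+1}$ as a fixed integer polynomial in $p_k$ of degree $b$ with a nontrivial lower-order term, and since $p_{k+1}$ is prime while $p_k \to \infty$, one shows $F(x, \pm 1)$ must be reducible over $\mathbb{Z}$ (it takes prime values only finitely often unless it is linear times a constant — but it has degree $b \ge 3$ here, actually $b \ge 2$ when tracking the degree-$2$ Pisot case gives $p_{k+1} = p_k^b - \dots$) — the precise contradiction is that a degree-$\ge 2$ integer polynomial with positive leading coefficient cannot be prime for all large inputs unless it factors, and here irreducibility is forced by the Pisot structure. I expect the main obstacle to be making this last paragraph rigorous: carefully deriving the exact polynomial identity $p_{k+1} = F_b(p_k, N^{n_k})$ from the Newton identities for the two conjugates, checking $N^{n_k}$ is eventually constant (it is $\pm 1$, and if $N = -1$ pass to the subsequence of even $n_k$, which is still infinite within $\mathcal{I}_b$ after possibly thinning, or note $n_k$ parity is eventually constant along $\mathcal{I}_b$ since consecutive ratios are $b$), and then invoking the standard fact that such an $F_b$, being monic of degree $b \ge 2$, represents composite numbers for all sufficiently large arguments unless $F_b(x) - $ has a linear factor over $\mathbb{Z}$, which contradicts the irreducibility coming from $\beta$ having degree $2$ (so $\beta^{C_k/C_m}$ has degree $2$, making $F_b$ genuinely quadratic-flavoured, not linear). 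Handling the case distinction $N = 1$ versus $N = -1$ cleanly, and the parity of $n_k$, is where care is needed.
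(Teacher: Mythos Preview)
Your proposal contains one false side claim, one genuine gap, and misses the short finishing arguments the paper actually uses.

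First, the claim that $N=\beta_1\beta_2=\pm 1$ is wrong: a quadratic Pisot number need not be a unit (e.g.\ $x^2-5x+2$ has roots $(5\pm\sqrt{17})/2$, norm $2$). This is harmless, since all you ever need is $N\in\mathbb{Z}$, hence $x_1x_2=N^{n_k}\in\mathbb{Z}$.

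Second, and fatally, your closing argument does not go through. You assert that a monic integer polynomial of degree $\ge 2$ ``represents composite numbers for all sufficiently large arguments unless [it] has a linear factor.'' It is true that such a polynomial cannot be prime at \emph{every} large integer, but you only evaluate $F_b$ along the sparse sequence $(p_k)_{k\in\mathcal{I}_b}$, and nothing prevents an irreducible polynomial from being prime at infinitely many prescribed inputs. So no contradiction is obtained this way. The vague appeal to ``irreducibility coming from $\beta$ having degree~$2$'' is also unjustified: the factorisation of $F_b(x,y)$ over $\mathbb{Z}[x]$ has nothing to do with $\deg\beta$.

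The paper's proof is much shorter. After invoking Lemma~\ref{Lemma-Trace} to get $p_k=\beta_1^{n_k}+\beta_2^{n_k}$ for large $k$, it treats the two cases by one line each.

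For $b=3$: with $x_j=\beta_j^{n_k}$ and $c_{k+1}=3$,
\[
p_k^{\,3}=(x_1+x_2)^3=(x_1^3+x_2^3)+3x_1x_2(x_1+x_2)=p_{k+1}+3(x_1x_2)\,p_k,
\]
and since $x_1x_2=N^{n_k}\in\mathbb{Z}$ this gives $p_k\mid p_{k+1}$, impossible for distinct primes. This is exactly your Newton-identity polynomial $F_3(x,y)=x^3-3xy=x(x^2-3y)$; you only had to notice the factor $x$ rather than reach for a general primality principle.

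For $b=4$: the divisibility route stalls because $F_4(x,y)=x^4-4yx^2+2y^2$ has no factor $x$. The paper instead uses a sign argument. Since $\mathcal{I}_4$ is infinite, one may choose $k\in\mathcal{I}_4$ large with $n_k=C_k/C_m$ even (some earlier $c_{j}=4$ divides $n_k$). Then $p_k=\lfloor\beta_1^{n_k}\rfloor\le\beta_1^{n_k}$ forces $\beta_2^{n_k}=p_k-\beta_1^{n_k}\le 0$, contradicting $\beta_2\in\mathbb{R}\setminus\{0\}$ and $n_k$ even.

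All of your size estimates on $\delta_k$ and the asymptotic expansion of $p_k^{\,b}$ are unnecessary.
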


\begin{proof}
Assume that there exists $m\in \mathbb{N}$ such that $\xi^{C_{m}}$ is a Pisot number of degree $2$. Let $\beta=\xi^{C_m}$, and let $\beta_1=\beta, \beta_2$ be the conjugates of $\beta$. Let $t_k$ be as in Lemma~\ref{Lemma-Trace}. \\[-5pt]

\noindent\underline{Case} ($b=3$): For every $k\in \mathcal{I}_3$,  we set $x_1=\beta_{1}^{C_k/C_m}$ and $x_2=\beta_2^{C_k/C_m}$. Then we obtain $c_{k+1}=3$ and
\[
t_k^3 =(x_1+x_2)^3= x_1^3 +3 x_1^2 x_2 +3 x_1 x_2^2 +x_2^3 = t_{k+1} + 3x_1x_2 t_k. 
\]
 By applying Lemma~\ref{Lemma-Trace}, for sufficiently large $k\in \mathcal{I}_3$, we have $p_k^3 = p_{k+1} + 3x_1x_2 p_k$. By $x_1x_2 \in \mathbb{Z}$, we see that $p_{k+1}$ is divisible by $p_k$, a contradiction since $p_k$ and $p_{k+1}$ are distinct prime numbers. \\[-5pt]
 
\noindent\underline{Case} ($b=4$): Let $k$ be a sufficiently large integer in $\mathcal{I}_4\cap[m,\infty)$.  Since $\mathcal{I}_4$ is infinite, we may assume that $C_k/C_m(\eqqcolon d_k)$ is a positive even integer. Setting $x_1=\beta_{1}^{d_k}$ and $x_2=\beta_2^{d_k}$, we have 
\[
\beta_1^{d_k}+ \beta_2^{d_k}=  p_k = \lfloor \beta_1^{d_k}  \rfloor \leq \beta_1^{d_k}. 
\]
Then $\beta_2^{d_k} \leq 0$, a contradiction since $\beta_2\in \mathbb{R}$ and $d_k$ is a positive even number. 
\end{proof}

Combining Lemmas~\ref{Lemma-trans-or-Pisot} and \ref{Lemma-not-Pisot} completes the proof of Theorem~\ref{Theorem-main1}.

\begin{remark}\label{Remark-b=3}
To prove the transcendency of Mills' constant, it remains to show that there does not exist $m\in \mathbb{N}$ such that $\xi^{C_{m}}$ is a Pisot number of degree $3$ when $b=3$. In this remark, we describe the difficulty of this case. 

Suppose $b=3$, and we assume that there exists $m\in \mathbb{N}$ such that $\xi^{C_{m}}$ is a Pisot number of degree $3$.  Let  $\beta=\xi^{C_{m}}$ and $\beta_1=\beta, \beta_2,\beta_3$ be the conjugates of $\beta$. Similarly with the proof of Lemma~\ref{Lemma-not-Pisot}, we obtain 
\begin{equation}\label{eq:Pisotdeg3}
p_{k+1} = p_{k}^3-3 b_k p_k +3 e_k
\end{equation}
for every sufficiently large $k\in \mathcal{I}_3$, where 
\begin{gather*}
b_k\coloneqq (\beta_1\beta_2)^{C_k/C_m}+(\beta_2\beta_3)^{C_k/C_m} +(\beta_3\beta_1)^{C_k/C_m}, \quad e_k\coloneqq (\beta_1\beta_2 \beta_3)^{C_k/C_m}.
\end{gather*} 
Therefore,  we have to investigate properties of $b_k$ and $e_k$. The author does not have any good ideas on how to treat $b_k$ and $e_k$ simultaneously. Indeed, the method of the proof of  Lemma~\ref{Lemma-not-Pisot} does not work. 
\end{remark}

\section{Further discussions}\label{Section-Further}

We give several assumptions leading to the transcendency of $\xi_3$. 
\begin{proposition}\label{Proposition-mod3} Assume that $\lfloor \xi_3 ^{3^k}\rfloor \not\equiv \lfloor \xi_3 ^{3^{k+1}}\rfloor \mod 3$ for infinitely many positive integers $k$. Then $\xi_3$ is transcendental. 
\end{proposition}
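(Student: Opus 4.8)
The plan is to use the dichotomy of Theorem~\ref{Theorem-Mills} and convert the hypothesis into a congruence obstruction modulo $3$. By that theorem, either $\xi_3$ is transcendental --- in which case there is nothing to prove --- or there exists an integer $m\geq 1$ such that $\beta\coloneqq\xi_3^{3^m}$ is a Pisot number of degree $3$. So it suffices to show that the second alternative is incompatible with the hypothesis.

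Assume $\beta=\xi_3^{3^m}$ is a Pisot number of degree $3$, with conjugates $\beta_1=\beta,\beta_2,\beta_3$. Since $\xi_3=\min\mathcal{W}(c_k)$ for the constant sequence $c_1=c_2=\cdots=3$, we have $C_k=3^k$ and $\mathcal{I}_3=\mathbb{N}$, so Lemma~\ref{Lemma-Trace} applies: writing $t_k=\beta_1^{3^{k-m}}+\beta_2^{3^{k-m}}+\beta_3^{3^{k-m}}$, we have $t_k=\lfloor\xi_3^{3^k}\rfloor=p_k$ for every sufficiently large $k$. Reproducing the computation sketched in Remark~\ref{Remark-b=3}, I would expand the cube of a sum of three terms,
\[
(x_1+x_2+x_3)^3=(x_1^3+x_2^3+x_3^3)+3(x_1+x_2+x_3)(x_1x_2+x_2x_3+x_3x_1)-3x_1x_2x_3,
\]
with $x_j=\beta_j^{3^{k-m}}$ and $x_1^3+x_2^3+x_3^3=t_{k+1}=p_{k+1}$, to obtain
\[
p_{k+1}=p_k^3-3b_kp_k+3e_k,\qquad b_k\coloneqq\sum_{i<j}(\beta_i\beta_j)^{3^{k-m}},\quad e_k\coloneqq(\beta_1\beta_2\beta_3)^{3^{k-m}},
\]
valid for all sufficiently large $k$. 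The one point that needs a word of justification is that $b_k,e_k\in\mathbb{Z}$: they are the second and third elementary symmetric functions of $\beta_1^{3^{k-m}},\beta_2^{3^{k-m}},\beta_3^{3^{k-m}}$, hence fixed by the Galois group of a splitting field of the minimal polynomial of $\beta$ (which only permutes $\beta_1,\beta_2,\beta_3$), so they lie in $\mathbb{Q}$; being polynomial expressions in the algebraic integers $\beta_j$, they are in fact rational integers.

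Reducing $p_{k+1}=p_k^3-3b_kp_k+3e_k$ modulo $3$ kills the last two terms, so $p_{k+1}\equiv p_k^3\pmod 3$; and since $a^3\equiv a\pmod 3$ for every integer $a$ by Fermat's little theorem, we get $\lfloor\xi_3^{3^{k+1}}\rfloor=p_{k+1}\equiv p_k=\lfloor\xi_3^{3^k}\rfloor\pmod 3$ for all sufficiently large $k$. This contradicts the assumption that $\lfloor\xi_3^{3^k}\rfloor\not\equiv\lfloor\xi_3^{3^{k+1}}\rfloor\pmod 3$ for infinitely many $k$. Hence no power $\xi_3^{3^m}$ is a Pisot number of degree $3$, and by Theorem~\ref{Theorem-Mills} the constant $\xi_3$ is transcendental. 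Given the machinery already developed, the argument is short; the only step needing care --- the integrality of $b_k$ and $e_k$ and the precise form of the cube identity --- is routine, so I do not anticipate a serious obstacle.
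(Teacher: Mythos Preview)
Your proof is correct and follows essentially the same route as the paper: invoke the dichotomy of Theorem~\ref{Theorem-Mills}, and in the Pisot case use the identity $p_{k+1}=p_k^3-3b_kp_k+3e_k$ (the content of Remark~\ref{Remark-b=3}, itself relying on Lemma~\ref{Lemma-Trace}) together with Fermat's little theorem to force $p_{k+1}\equiv p_k\pmod 3$ for all large $k$. You simply spell out the cube identity and the integrality of $b_k,e_k$ in more detail than the paper does.
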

\begin{proof}
Suppose that $\xi_3$ is algebraic. Then by Theorem~\ref{Theorem-Mills}, there exists $m\in \mathbb{N}$ such that $\xi_3^{3^m}$ is a Pisot number of degree $3$. Let $p_k=\lfloor \xi_3^{3^k} \rfloor$ for every $k\in \mathbb{N}$. By Fermat's little theorem, $p_k^3\equiv p_k \mod 3$. Thus, by Remark~\ref{Remark-b=3}, we have $p_{k+1} \equiv p_{k} \mod 3$ for every sufficiently large $k\geq m$. 
\end{proof}
A \textit{Mills prime} is a prime number of the form $\lfloor \xi_3^{3^k}\rfloor$. By Proposition~\ref{Proposition-mod3}, if there are infinitely many Mills primes of the forms $3n+1$ and $3n+2$ respectively, then $\xi_3$ is transcendental. However, it seems to be impossible to classify Mills primes modulo $3$.    

\begin{proposition}\label{Proposition-PrimeGap} Assume that there exist $\theta'\in (0,1/2)$ and $x_0>0$ such that for every $x\geq x_0$ we can find a prime number $p\in [x, x+x^{\theta'}]$. Then $\xi_3$ is transcendental. 
\end{proposition}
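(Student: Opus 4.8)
The plan is to rerun the arguments of Sections~\ref{Section-Distance} and~\ref{Section-Proof}, specialised to $\xi_3$, with the exponent $\theta=21/40$ replaced throughout by $\theta'$ and with the hypothesis playing the role of Theorems~\ref{Theorem-BakerHarmanPintz} and~\ref{Theorem-Matomaki}. Put $\theta_3'=1-\theta'-\tfrac13=\tfrac23-\theta'$; since $\theta'<\tfrac12$ we have $\theta_3'>\tfrac16$, hence $3\theta_3'>\tfrac12$. This single gain is all that is needed: it will force the degree of any Pisot power of $\xi_3$ down to $2$, a case already excluded by Lemma~\ref{Lemma-not-Pisot}.

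First I would prove the analogue of Lemma~\ref{Lemma-key2}: writing $p_k=\lfloor\xi_3^{3^k}\rfloor$, one has $p_k^3\le p_{k+1}\le p_k^3+p_k^{3\theta'}$ for all sufficiently large $k$. The lower bound is Lemma~\ref{Lemma-key1}. If the upper bound failed for arbitrarily large $k$, I would fix such a large $k$ and construct primes $q_k=p_k$ and, inductively for $m\ge k$, a prime $q_{m+1}\in[q_m^3,\,q_m^3+q_m^{3\theta'}]$ using the hypothesis (legitimate because $q_m^3\ge x_0$ for $k$ large, and $q_m^3+q_m^{3\theta'}<(q_m+1)^3-1$ since $3\theta'<2$); then set $q_m=p_m$ for $m<k$ and $w=\lim_m q_m^{1/3^m}$. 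Just as in the proof of Lemma~\ref{Lemma-key2}, $w\in\mathcal{W}(3)$, while
\[
\lfloor w^{3^{k+1}}\rfloor=q_{k+1}\le p_k^3+p_k^{3\theta'}<p_{k+1}=\lfloor\xi_3^{3^{k+1}}\rfloor
\]
gives $w<\xi_3$, contradicting minimality. From this, the analogue of Lemma~\ref{Lemma-Nearest} follows exactly as there: $0\le\xi_3^{3^k}-p_k=O(p_k^{3\theta'-2})=O(p_k^{-3\theta_3'})$, and iterating $p_k\ge p_{k-1}^3\ge\cdots\ge p_1^{3^{k-1}}$ gives $|\xi_3^{3^k}-p_k|\le e^{-\gamma 3^k}$ for some $\gamma>0$ and all large $k$.

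Next, suppose $\xi_3$ is algebraic (it is $>1$). Theorem~\ref{Theorem-Dubickas1}, applied with $\alpha=\xi_3$, $q=1$, $s_k=3^k$, $\epsilon=\gamma$, and the bound $\|\xi_3^{3^k}\|\le e^{-\gamma 3^k}$, shows that $\beta:=\xi_3^{3^m}$ is a Pisot number for some $m$; let $\ell=\deg\beta$, with conjugates $\beta_1=\beta>1>|\beta_2|\ge\cdots\ge|\beta_\ell|$. If $\ell=1$ then $\beta\in\mathbb{Z}$ and $p_{m+1}=\beta^3=p_m^3$ is not prime, so $\ell\ge2$. Rerunning the Claim of Lemma~\ref{Lemma-trans-or-Pisot} with $\theta_3'$ in place of $\theta_b$: for large $k$, using $\xi_3^{3^{k+1}}\le 2p_k^3$ together with $0\le\xi_3^{3^k}-p_k\le K_0 p_k^{-3\theta_3'}$, one finds
\[
\Bigl\|{\textstyle\sum_{j=2}^{\ell}\beta_j^{3^{k-m}}}\Bigr\|=\|\xi_3^{3^k}\|\le K_1\,\beta_1^{-3\theta_3'\cdot 3^{k-m}},
\]
while Lemma~\ref{lemma-Dubickas2} with $n=3^{k-m}$ supplies the lower bound $|\beta_2|^{3^{k-m}}(3^{k-m})^{-\lambda}$; letting $k\to\infty$ yields $|\beta_2|\le\beta_1^{-3\theta_3'}$, hence $|\beta_2\cdots\beta_\ell|\le\beta_1^{-(\ell-1)3\theta_3'}$. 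Since $3\theta_3'>\tfrac12$, if $\ell\ge3$ then $(\ell-1)3\theta_3'>1$ and $|\beta_1\cdots\beta_\ell|<1$, which is impossible. Thus $\ell=2$, and Lemma~\ref{Lemma-not-Pisot} (case $b=3$, whose proof uses only Lemma~\ref{Lemma-Trace} and the fact that $-c_{k+1}\theta_b<0$, both intact after the substitution) produces a contradiction. Hence $\xi_3$ is transcendental.

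I do not expect a genuine obstacle here. The whole content is that a prime gap of exponent strictly below $\tfrac12$ pushes $3\theta_3'$ above $\tfrac12$, which is precisely the inequality that shrinks the degree bound $(b\theta_b)^{-1}+1$ in the Claim of Lemma~\ref{Lemma-trans-or-Pisot} from below $4$ to below $3$, so that only $\ell=2$ survives and Lemma~\ref{Lemma-not-Pisot} finishes the argument. The only care required is routine bookkeeping: verifying that Lemmas~\ref{Lemma-key1}--\ref{Lemma-not-Pisot} all remain valid after replacing $\theta$ by $\theta'$ and after discarding the now-superfluous appeal to Matom\"aki's theorem, and observing that $\theta'<\tfrac12$ is used only through the strict inequality $3\theta_3'>\tfrac12$, so no boundary case arises.
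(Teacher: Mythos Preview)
Your proposal is correct and follows essentially the same approach as the paper: replace $\theta$ by $\theta'$ in Lemmas~\ref{Lemma-key2}--\ref{Lemma-Nearest} and rerun the degree bound from the Claim in Lemma~\ref{Lemma-trans-or-Pisot}, so that $3\theta_3'>\tfrac12$ forces the Pisot degree below $3$. The only cosmetic difference is that the paper first invokes Theorem~\ref{Theorem-Mills} to pin the degree at exactly $3$ and then derives the contradiction $3\le (3\theta_3')^{-1}+1<3$, whereas you redo the whole Pisot analysis to land at $\ell=2$ and then cite Lemma~\ref{Lemma-not-Pisot}; the content is the same.
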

\begin{proof}
Suppose that $\xi_3$ is algebraic. Then by Theorem~\ref{Theorem-Mills}, there exists $m\in \mathbb{N}$ such that $\xi_3^{3^m}$ is a Pisot number of degree $3$. We set $\theta_b'=\theta_3'=1-\theta'-1/3$. In a similar manner with Lemma~\ref{Lemma-Nearest} and the claim in the proof of Lemma~\ref{Lemma-trans-or-Pisot}, we have
\[
3=\deg (\xi_3^{3^m})\leq  (3\theta_3')^{-1}+1 = \frac{1}{2-3\theta'} + 1< \frac{1}{2-3/2}+1 = 3,
\] 
a contradiction.
\end{proof}

Let $p_n$ be the $n$th prime number for every $n\in \mathbb{N}$. Cram\'er \cite{Cramer} conjectured that $p_{n+1}-p_n =O((\log p_n)^2)$. If this conjecture is true, we can take $\theta'$ in Proposition~\ref{Proposition-PrimeGap} as an arbitrarily small positive real number. Therefore, assuming Cram\'er's conjecture, $\xi_3$ is transcendental. 

\begin{question}
Assuming the Riemann hypothesis, is Mills' constant transcendental?
\end{question}
Carneiro, Milinovich, and, Soundararajan \cite[Theorem~1.5]{CMS} showed that  for all real numbers $x\geq 4$, there exists a prime number $p\in \left(x,x+\frac{22}{25}\sqrt{x}\log x\right)$ if the Riemann hypothesis is true. However, this does not reach the assumption in Proposition~\ref{Proposition-PrimeGap}. We need more arithmetic (or algebraic) properties of Mills primes to answer the question affirmatively.

\section*{Acknowledgement}
The author would like to thank Professor Shigeki Akiyama for daily attractive discussions on mathematics. The author is grateful to Professor Hajime Kaneko for giving useful ideas to simplify the proof on the cases $b=3$ and $4$. The author is also grateful to Professor Wataru Takeda and the referees for finding errors and mistakes. The author was supported by JSPS KAKENHI Grant Numbers JP22J00025, JP22KJ0375, and JP25K17223. 

\bibliographystyle{abbrv}
\bibliography{references}

\end{document}